\theoremstyle{plain}
\newtheorem{theorem}{Theorem}[section]
\newtheorem{proposition}[theorem]{Proposition}
\newtheorem{lemma}[theorem]{Lemma}
\theoremstyle{definition}
\newtheorem{remark}{Remark}
\newtheorem{example}{Example}[section]
\title[A Criterion for the Existence of Relaxation Oscillations]%
{A Criterion for the Existence of \\
Relaxation Oscillations
with Applications to 
\\
Predator-Prey Systems
and an Epidemic Model}
\author[T.-H.~Hsu and G.S.K.~Wolkowicz]{}
\email{hsut1@math.miami.edu}
\email{wolkowic@mcmaster.ca}
\date{\today}
\subjclass{%
Primary: 34C26;
Secondary: 92D25.
}
\keywords{%
relaxation oscillations, limit cycles,
chemostat predator-prey models,
epidemic models, periodicity in disease incidence%
}
\begin{document}

\maketitle

\centerline{\scshape Ting-Hao~Hsu$^{\dag}$ and Gail~S.~K.~Wolkowicz$^{\ddag}$}
\medskip
{\footnotesize
\centerline{$^\dag$Department of Mathematics}
\centerline{University of Miami}
\centerline{1365 Memorial Drive}
\centerline{Coral Gables, FL 33146, USA}
\medskip
\centerline{$^\ddag$Department of Mathematics and Statistics}
\centerline{McMaster University}
\centerline{1280 Main Street West}
\centerline{Hamilton, Ontario L8S 4K1, Canada}
}
\bigskip

\begin{abstract}
We derive characteristic functions
to determine the number and stability of relaxation oscillations
for a class of planar systems.
Applying our criterion,
we give conditions under which
the chemostat predator-prey system
has a globally orbitally asymptotically stable limit cycle.
Also we demonstrate that
a prescribed number of relaxation oscillations
can be constructed
by varying the perturbation
for an epidemic model
studied by Li et al.\ [SIAM J. Appl. Math, 2016].
\end{abstract}

\newcommand{\IN}{\mathrm{in}}
\newcommand{\OUT}{\mathrm{out}}

\section{Introduction}
Periodic orbits in ecological models are important because
they can be used to explain oscillatory phenomena observed in real-world data.
Relaxation oscillations
are periodic orbits
formed from slow and fast sections.
In this paper,
we extend the criterion for the existence
of relaxation oscillations given by Hsu \cite{Hsu:2019}
to a class of planar systems.
We apply our criterion to two ecological models:
a predator-prey system in a chemostat,
and an epidemic model.
Both systems are three-dimensional,
and have two-dimensional invariant manifolds.

Predator-prey interaction in a well-stirred chemostat
(see e.g.\ \cite{Smith:1995})
can be modeled
by the following ordinary differential equations:
\begin{equation}\label{deq_predprey}\begin{aligned}
  &\dot{S}= (S^0-S)\epsilon- \rho mSx,
  \\
  &\dot{x}= x\big(-\epsilon+mS\big)-cyp(x),
  \\
  &\dot{y}= y\big(-\epsilon+p(x)\big),
\end{aligned}\end{equation} where $\cdot$ denotes $\frac{d}{dt}$,
$S(t)$ is the concentration of the nutrient in the growth chamber at time $t$,
$x(t)$ and $y(t)$ are the density of prey (which feeds off this nutrient)
and predator populations, respectively;
$S^0$ denotes the concentration of the input nutrient,
and $\rho$ and $c$ are
constants related to the consumption of the nutrient by the prey
population
and the consumption of the prey by the predators, respectively.
To ensure that the volume of this vessel remains constant,
$\epsilon$ denotes both the rate of inflow from the nutrient reservoir to the growth chamber,
as well as the rate of outflow from the growth chamber.
The functional response $p(x)$,
which describes the change in the density of the prey attacked per unit time per predator,
is continuously differentiable and satisfies
\begin{equation}\label{cond_p}
  p(0)=0,\;\;
  p'(0)>0,\;\;
  \text{and}\;\;
  p(x)>0\;\;\forall\;x>0.
\end{equation}
It can be verified that
system \eqref{deq_predprey}
under assumption \eqref{cond_p}
has a unique positive equilibrium for all small $\epsilon>0$.
From the equations in \eqref{deq_predprey},
\begin{equation}\label{decay_predprey}
  \frac{d}{dt}
  \left(S+\rho x+ c\rho y\right)
  = -\epsilon\left(S+\rho x+ c\rho y-S^0\right),
\end{equation} so system \eqref{deq_predprey} has an invariant simplex
\begin{equation}\label{def_Lambda_predprey}
  \Lambda= \{(S,x,y)\in \mathbb R^3_+: S+\rho x+c\rho y=S^0\}
\end{equation}
that attracts all points in $\mathbb R^3_+$.
For system \eqref{deq_predprey} with $\epsilon=0$,
there is a continuous family of heteroclinic orbits on $\Lambda$
as illustrated in Figure~\ref{fig_predprey_H2gamma}
(see equation \eqref{fast_predprey} in Section \ref{sec_predprey}
and its succeeding paragraph for the limiting system on $\Lambda$).
Each heteroclinic orbit 
connects two points
on the boundary of $\Lambda$,
where $x=0$.
On the other hand,
the restriction of system \eqref{deq_predprey} on the plane $\{x=0\}$ is \begin{equation}\label{slow_predprey}
  \dot{S}=(S^0-S)\epsilon,
  \quad
  \dot{x}=0,
  \quad
  \dot{y}=-\epsilon y.
\end{equation}
Hence the  segment $\overline{\Lambda}\cap \{x=0\}$
is a trajectory of system \eqref{deq_predprey}
approaching the positive $S$-axis.
The heteroclinic orbits for the limiting system
and the trajectory for \eqref{slow_predprey},
forms a continuous family of closed loops.
In Section~\ref{sec_predprey},
we use these loops to construct periodic orbits for the full system
(see Theorem \ref{thm_predprey}).
Under certain conditions,
we show that \eqref{deq_predprey}
has a globally asymptotically periodic orbit in $\mathbb R^3_+$
for all sufficiently small $\epsilon>0$.
Moreover, the minimal period of the periodic orbit is of order $1/\epsilon$
as $\epsilon\to 0$,
and the trajectory converges to one of the closed loops described above.
That is, this family of periodic orbits forms a relaxation oscillation.

\begin{figure}[htb]
{\centering
{\includegraphics[trim= 0cm 1cm 0cm 0cm, clip, width=.7\textwidth]{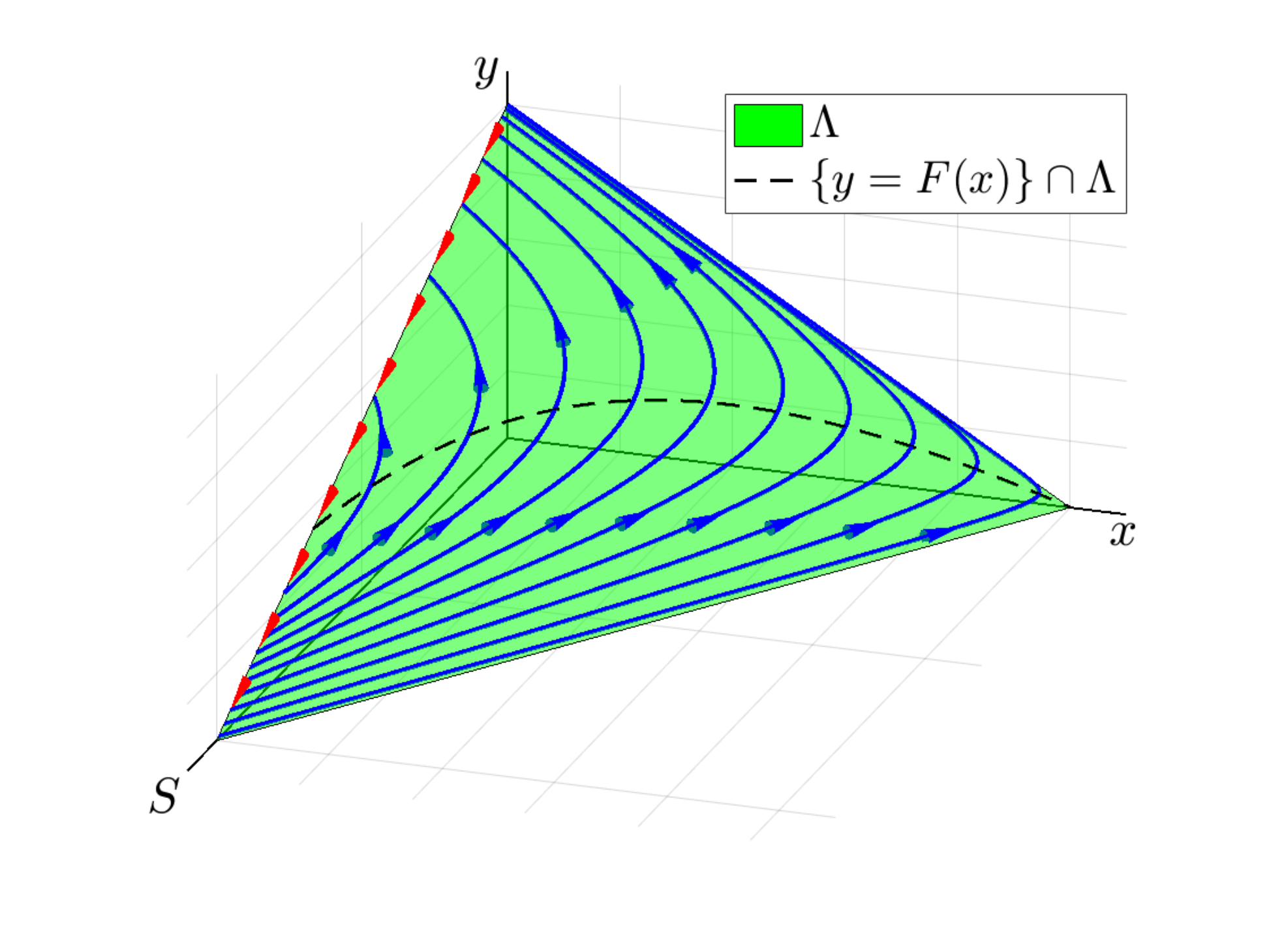}}
\par}
\caption{
System \eqref{deq_predprey} with $\epsilon=0$
exhibits a family of heteroclinic orbits on $\Lambda$.
The dynamics on the segment $\overline{\Lambda}\cap \{x=0\}$
is governed by system \eqref{slow_predprey}.
}
\label{fig_predprey_H2gamma}
\end{figure}

We also study the epidemic model
\begin{equation}\label{deq_SIR}\begin{aligned}
  &\dot{S}= b(N)- g(S,N)I- (D+p)S,
  \\
  &\dot{I}=g(S,N)I- (d+\gamma+\alpha)I,
  \\
  &\dot{R}= pS+\gamma I-DR,
\end{aligned}\end{equation}
which was proposed and investigated by Graef et al.\ \cite{Graef:1996}
and Li et al.\ \cite{Li:2016}.
Here $N=S+I+R$ and $b(N)=DN+\epsilon f(N)$,
with \begin{equation}\label{SIN_f}
  f(N)= rN\left(1-\frac{N}{N_{\max}}\right),
\end{equation}
and \begin{equation}\notag
  \partial_Ng(S,N)>0
  \quad\text{and}\quad
  \partial_Ng(S,N)>0
  \quad\forall\; S\ge 0,\; N\ge 0.
\end{equation}
Let $a=d+\gamma+\alpha$.
System \eqref{deq_SIR} is equivalent to
\begin{equation}\label{deq_SIN}\begin{aligned}
  &\dot{S}= DN+ \epsilon f(N)- g(S,N)I- (D+p)S,
  \\
  &\dot{I}=g(S,N)I- aI,
  \\
  &\dot{N}= \epsilon f(N)-\alpha I.
\end{aligned}\end{equation}
Setting $\epsilon=0$ in \eqref{deq_SIN}, we obtain the limiting system \begin{equation}\label{deq_center_SIN}\begin{aligned}
  &\dot{S}= DN+ g(S,N)I- (D+p)S,
  \\
  &\dot{I}=g(S,N)I- aI,
  \\
  &\dot{N}= -\alpha I.
\end{aligned}\end{equation}
Note that the line $\mathcal{Z}_0\equiv\{(S,I,N): I=0, S=\frac{D}{D+p}N\}$
is a set of equilibria of \eqref{deq_SIN}
in the invariant plane $\{I=0\}$.
It is known \cite{Graef:1996,Li:2016} that
$\mathcal{Z}_0$ consists of the endpoints of a family of heteroclinic orbits
(see Figure~\ref{fig_SIN_gamma}).
The existence of periodic orbits of \eqref{deq_SIN}
was proved by Li et.\ al \cite{Li:2016}.
In Section~\ref{sec_SIN},
we demonstrate that
a prescribed number of relaxation oscillations for system \eqref{deq_SIN} 
can be obtained by varying the perturbation term $\epsilon f(N)$.

\begin{figure}[htb]
{\centering
{\includegraphics[trim = 0cm 1cm 0cm .5cm, clip, width=.7\textwidth]{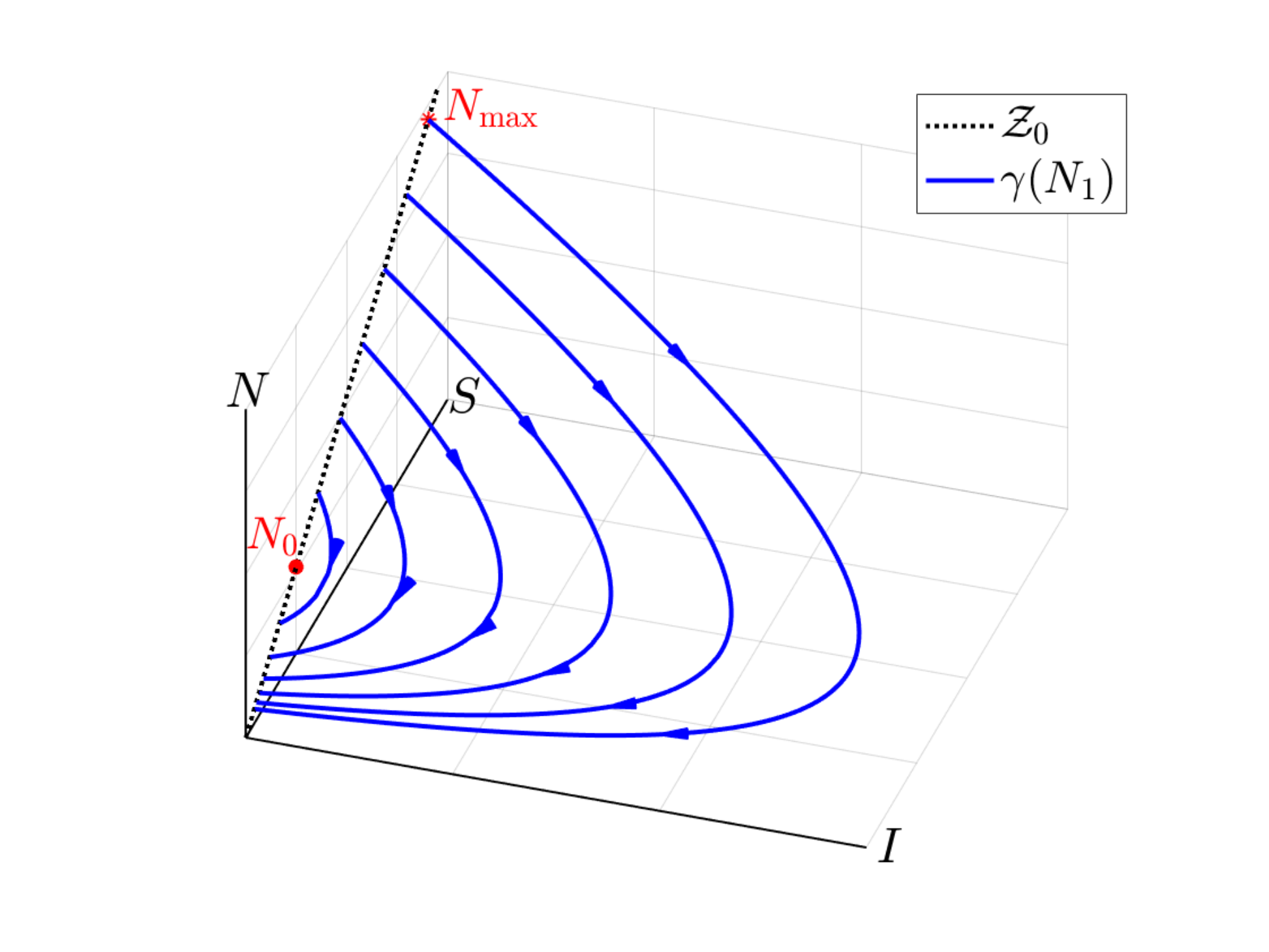}}
\par}
\caption{
Trajectories for the limiting system \eqref{deq_center_SIN}
of \eqref{deq_SIN} with $\epsilon=0$.
}
\label{fig_SIN_gamma}
\end{figure}

Singular perturbations in predator-prey systems were studied in various contexts.
For a model of two predators competing for the same prey,
when the prey population grows much faster than the predator populations,
the existence of a relaxation oscillation
was proved by Liu, Xiao and Yi
\cite{Liu:2003}.
For predator-prey systems
with Holling type III or IV,
when the death and the yield rates of the predator
are small and proportional to each other,
the canard phenomenon
and the cyclicity of limit cycles
were investigated by 
Li and Zhu \cite{Li:2013}.
For a class of
the Holling-Tanner model,
when the intrinsic growth rate of the predator is sufficiently small,
the existence of a relaxation oscillation
was proved by
Ghazaryan, Manukian and Schecter
\cite{Ghazaryan:2015}.
For predator-prey models
with eco-evolutionary dynamics
in which ecological and evolutionary interactions
occur on different time scales,
relaxation oscillations
were investigated by
Piltz et~al.\ \cite{Piltz:2017}
and Shen~et~al.~\cite{Shen:2019}.
For the classical predator-prey model
with Monod functional response
and small predator death rates,
the  unique periodic orbit,
which was proved to exist by 
Liou and Cheng \cite{Liou:1988}
(who corrected a flaw in the original proof of Cheng \cite{Cheng:1981})
and Kuang and Freedman \cite{Kuang:1988},
was proved to form a relaxation oscillation
by Hsu and Shi \cite{Hsu:2009},
Wang et al.\ \cite{Wang:2014},
and Lundstr\"{o}m and S\"{o}derbacka \cite{Lundstrom:2018},
and the cyclicity of the limit cycle
was investigated by Huzak \cite{Huzak:2018}.
For general functional responses,
the number 
of relaxation oscillations
depending on
the number of local extrema of the prey-isocline
was studied by Hsu \cite{Hsu:2019}.

This paper is organized as follows.
In Section~\ref{sec_criteria},
we state and prove criteria for
the location and stability of relaxation oscillations
in a class of planar systems.
In Section~\ref{sec_predprey}
we investigate our criterion
for the chemostat predator-prey system \eqref{deq_predprey},
and give conditions under which
the system has exactly one or two periodic orbits.
The epidemic model \eqref{deq_SIN} is studied in Section~\ref{sec_SIN},
in which we compute the characteristic functions
in terms of a parametrization of the center manifold,
and we use numerical simulation
to find the number of relaxation oscillations.

\section{A Criterion for Relaxation Oscillations in Planar Systems}
\label{sec_criteria}

To determine the location and stability
of relaxation oscillations
for predator-prey systems
with small predator death,
a criterion was given in \cite{Hsu:2019}.
In this section
we extend that criteria
by considering planar systems of the form
\begin{equation}\label{deq_ab}\begin{aligned}
  &\dot{a}= \epsilon f(a,b,\epsilon)+ b\, h(a,b,\epsilon),
  \\
  &\dot{b}= b\,g(a,b,\epsilon),
\end{aligned}\end{equation}
where $f$, $g$ and $h$ are smooth functions
satisfying,
for some numbers $-\infty\le a_{\min}< \bar{a}< a_{\max}\le \infty$,
\begin{align}
  \label{turning_f}
  &f(a,0,0)> 0
  \quad
  \quad\forall\; a\in (a_{\min},a_{\max}),
  \\[.5em]
  \label{turning_h}
  &
  h(a,b,0)<0
  \quad\forall\; a\in (a_{\min},a_{\max}),\; b\ge 0,
  \\
\intertext{and}
  \label{turning_g}
  &g(a,0,0)\begin{cases}
    <0,&\text{if }a\in (a_{\min},\bar{a}),
    \\
    >0,&\text{if }a\in (\bar{a},a_{\max}).
  \end{cases}
\end{align}

Setting $\epsilon=0$ in \eqref{deq_ab},
we obtain the limiting system
\begin{equation}\label{fast_ab}\begin{aligned}
  &\dot{a}= b\, h(a,b,0),
  \\
  &\dot{b}= b\, g(a,b,0).
\end{aligned}\end{equation}
We assume the following condition
(see Figure~\ref{fig_gamma}):

\begin{enumerate}[(H)]
\item
\label{cond_H}
There exists a nonempty open interval $I$
and smooth functions $a_\alpha:I\to (\bar{a},a_{\max})$
and $a_\omega:I\to (a_{\min},\bar{a})$,
such that, for each $s\in I$,
the points $(a_\alpha(s),0)$ and $(a_\omega(s),0)$
are the alpha- and omega-limit points, respectively,
of a trajectory of \eqref{fast_ab}.
\end{enumerate}

\begin{figure}[htb]
{\centering
{\includegraphics[trim = 2.7cm .9cm 1cm .7cm, clip, width=.55\textwidth]{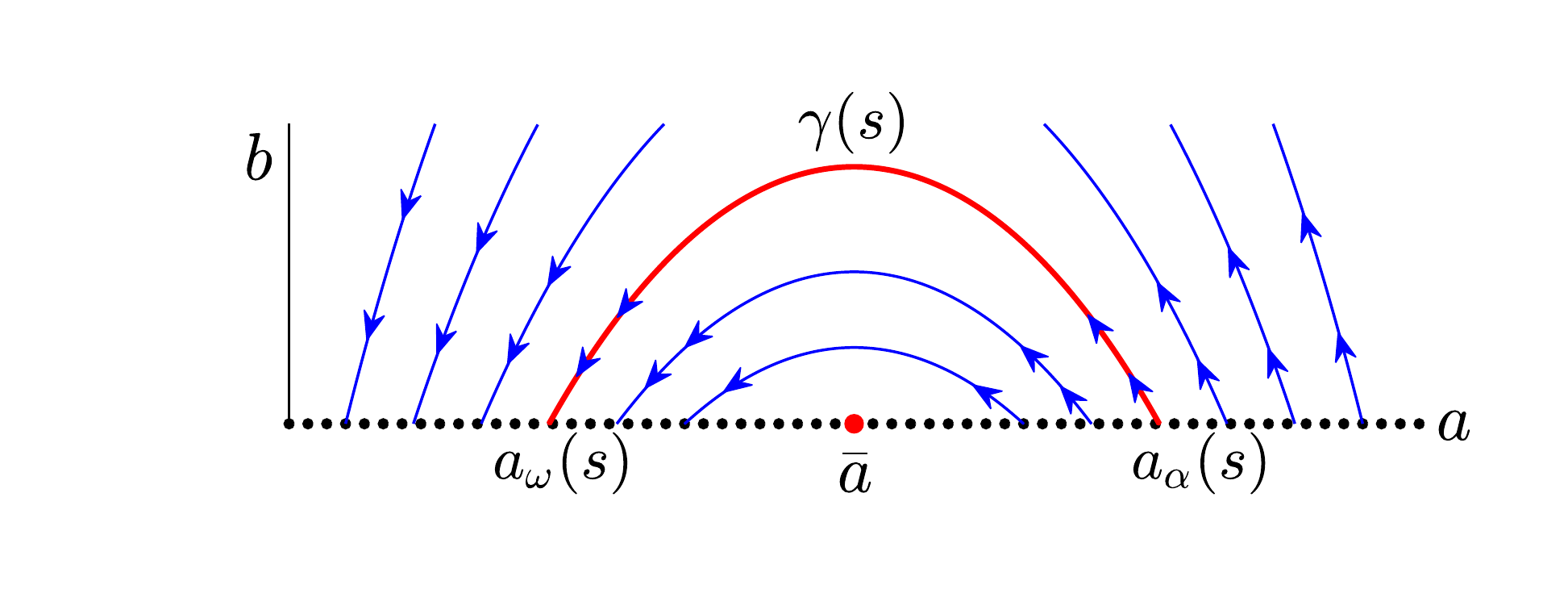}}
\par}
\caption{
A family of heteroclinic orbits,
parameterized by $\gamma(s)$,
of limiting system \eqref{fast_ab}.
}
\label{fig_gamma}
\end{figure}

Our approach is to use
{\em geometric singular perturbation theory}
\cite{Fenichel:1979,Jones:1995,Kuehn:2016}
to construct periodic orbits.
The idea is that solutions of the full system
can potentially be obtained
by joining trajectories of the limiting systems.
Under assumption $\mathrm{(\hyperref[cond_H]{H})}$
for each $s\in I$,
we denote $\gamma(s)$ the trajectory of \eqref{deq_ab}
connecting $(a_\alpha(s))$ and $(a_\omega(s))$,
and denote $\sigma(s)$ the segment in the $a$-axis
that starts at $(a_\omega(s),0)$ and ends $(a_\alpha(s),0)$.
Then $\Gamma(s)=\gamma(s)\cup \sigma(s)$
forms a closed loop.
Hence each $\Gamma(s)$
is potentially the limiting configuration of a relaxation oscillation.
Using a variation of
{\em bifurcation delay}
(see \cite{DeMaesschalck:2016,Hsu:2017}
and the references therein),
in the following theorem we are able to show that
only certain $\Gamma(s)$
correspond to relaxation oscillations.

\begin{theorem}\label{thm_general}
Assume \eqref{turning_f}--\eqref{turning_g}
and $\mathrm{(\hyperref[cond_H]{H})}$.
Set \begin{equation}\label{def_chi}
  \chi(s)
  = \int_{a_\omega(s)}^{a_\alpha(s)} \frac{g(a,0,0)}{f(a,0,0)}\;da
\end{equation}
and \begin{equation}\label{def_lambda}
  \lambda(s)
  = \ln\frac{f(a_\alpha(s),0,0)}{f(a_\omega(s),0,0)}
  + \int_{\gamma(s)}\frac{\partial_ah(a,b,0)}{h(a,b,0)}\;da
  + \int_{\gamma(s)}\frac{\partial_bg(a,b,0)}{g(a,b,0)}\;db.
\end{equation}
If $s_0\in I$ satisfies $\chi(s_0)=0$ and $\lambda(s_0)\ne 0$,
then for all sufficiently small $\epsilon>0$,
there is a periodic orbit ${\ell}_\epsilon$ of \eqref{deq_ab}
in a $O(\epsilon)$-neighborhood of $\Gamma(s_0)$.
The minimal period of $\ell_\epsilon$, denoted by $T_\epsilon$, satisfies \begin{equation}\label{est_Teps_ab}
  T_\epsilon= \frac{1}{\epsilon}\left(
    \int_{a_\omega(s_0)}^{a_\alpha(s_0)}\frac{1}{f(a,0,0)}\;da+o(1)
  \right)
  \quad\text{as }\epsilon\to 0.
\end{equation}
Moreover, 
${\ell}_\epsilon$ is locally orbitally asymptotically stable if $\lambda(s_0)<0$,
and is orbitally unstable if $\lambda(s_0)>0$.
Conversely, 
if $\chi(s_0)\ne 0$,
then for any point $z_1$ in the interior of the trajectory $\gamma(s_0)$,
there is a neighborhood $U$ of $z_1$
such that no periodic orbit of \eqref{deq_ab}
intersects $U$ for any sufficiently small $\epsilon>0$.
\end{theorem}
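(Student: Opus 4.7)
The plan is to build a one-dimensional Poincaré map $P_\epsilon$ on a section transverse to the singular loop $\Gamma(s_0)$ and show it has a fixed point when $\chi(s_0)=0$, with stability governed by $\lambda(s_0)$.

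First I would use Fenichel's theorem: for small $\epsilon>0$, compact pieces of $\{(a,0):a\in(a_{\min},\bar a)\}$ and $\{(a,0):a\in(\bar a,a_{\max})\}$ perturb to a locally invariant attracting slow manifold $M_\epsilon^s$ and a locally invariant repelling slow manifold $M_\epsilon^u$, on each of which $\dot a=\epsilon f(a,0,0)+O(\epsilon^2)$ while nearby $\dot b = bg(a,0,0)+O(\epsilon)$. Fix a small $b_0>0$ and let $\Sigma=\{b=b_0\}$ near $(a_\omega(s_0),0)$, parametrized by the heteroclinic label $s$ via $s\mapsto (a_\omega(s),b_0)$. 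A trajectory starting on $\Sigma$ is attracted exponentially to $M_\epsilon^s$, drifts slowly past the turning point $\bar a$, climbs off $M_\epsilon^u$, and leaves the strip $\{b\le b_0\}$ at some point $(a^\ast,b_0)$. Adapting the bifurcation delay (entry-exit) results of \cite{DeMaesschalck:2016,Hsu:2017} to \eqref{deq_ab}, the slow-passage map $\Pi_{\mathrm{slow}}$ is characterized by $\int_{a_{\mathrm{in}}}^{a^\ast} g(a,0,0)/f(a,0,0)\,da=o(1)$, while the fast return map $\Pi_{\mathrm{fast}}$ is standard $O(1)$-time shadowing of $\gamma(s)$. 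The Poincaré map is $P_\epsilon=\Pi_{\mathrm{fast}}\circ\Pi_{\mathrm{slow}}$, and its fixed-point equation reduces at leading order to $\chi(s)=o(1)$.

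Next I would compute $P_\epsilon'(s_0)$. The fast portion contributes, via the variational equation along $\gamma(s_0)$, a factor
\[
  \exp\!\left(\int_{\gamma(s_0)}\frac{\partial_a h(a,b,0)}{h(a,b,0)}\,da+\int_{\gamma(s_0)}\frac{\partial_b g(a,b,0)}{g(a,b,0)}\,db\right),
\]
while a refinement of the slow passage contributes $f(a_\alpha(s_0),0,0)/f(a_\omega(s_0),0,0)$; together these give $P_\epsilon'(s_0)=e^{\lambda(s_0)}+o(1)$. If $\lambda(s_0)<0$, then near $s_0$ the map $P_\epsilon$ is a uniform strict contraction for small $\epsilon$, and since $P_\epsilon(s_0)-s_0\to 0$ (because $\chi(s_0)=0$), the Banach fixed-point theorem yields a unique $s_\epsilon\to s_0$; this is the periodic orbit $\ell_\epsilon$, and $|P_\epsilon'(s_\epsilon)|<1$ gives its orbital asymptotic stability. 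If $\lambda(s_0)>0$, the same argument applied to $P_\epsilon^{-1}$ yields existence with $|P_\epsilon'(s_\epsilon)|>1$, hence instability. The period estimate \eqref{est_Teps_ab} follows by integrating $\dot a=\epsilon f(a,0,0)+O(\epsilon^2)$ along the slow segment of $\ell_\epsilon$; the fast excursion contributes only $O(|\ln\epsilon|)$ time, which is $o(1/\epsilon)$. For the converse, if $\chi(s_0)\ne 0$ then by continuity $\chi(s)\ne 0$ on a neighborhood of $s_0$, and any periodic orbit passing through a small neighborhood of an interior point of $\gamma(s_0)$ would have to shadow some $\gamma(s)$ with $s$ close to $s_0$ and thus satisfy the entry-exit balance $\chi(s)=o(1)$, a contradiction.

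The main obstacle is establishing the entry-exit formula and the derivative formula with $o(1)$ errors in precisely the form stated. The standard bifurcation-delay results handle $\dot a=\epsilon f$, whereas here the slow equation carries an additional $b\,h$ term that is nonzero off the $a$-axis and contaminates both the slow-passage integral and the linearization. Absorbing this term, either via a preliminary smooth change of coordinates that rectifies the stable and unstable foliations along the $a$-axis, or through direct estimates exploiting the exponential smallness of $b$ along $M_\epsilon^s\cup M_\epsilon^u$, is the technical heart of the argument.
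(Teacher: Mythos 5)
Your skeleton is the paper's: a return map built from a slow entry--exit passage composed with a fast map shadowing $\gamma(s)$, a fixed point forced by $\chi(s_0)=0$ with nondegeneracy from $\lambda(s_0)\ne 0$, stability from the return-map derivative tending to $e^{\lambda(s_0)}$, the period from the slow drift, and the converse from the entry--exit balance. (Incidentally, no adaptation of the entry--exit literature is needed: Theorem~\ref{thm_entryexit2}, quoted from \cite{Hsu:2019}, is stated exactly for systems of the form \eqref{deq_ab}, already accommodates the $b\,h$ term, and supplies the $C^r$-smoothness up to $\epsilon=0$, the relation \eqref{entryexit_ab}, and the time estimate \eqref{est_Teps_ab2}.) The genuine gap is the step you yourself defer as ``the technical heart'': the derivative asymptotics. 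Moreover, your factor attribution is incorrect as stated. The singular limit of the slow-passage map $a_{\mathrm{in}}\mapsto a_{\mathrm{out}}$ is defined implicitly by $\int_{a_{\mathrm{in}}}^{a_{\mathrm{out}}}\frac{g(a,0,0)}{f(a,0,0)}\,da=0$, so its derivative is $\frac{g(a_{\mathrm{in}},0,0)\,f(a_{\mathrm{out}},0,0)}{f(a_{\mathrm{in}},0,0)\,g(a_{\mathrm{out}},0,0)}$ (a negative quantity), not $f(a_\alpha)/f(a_\omega)$; the latter is the derivative of the slow flow for a \emph{fixed} elapsed time, which is not what the entry--exit map is. Correspondingly, the fast transition map between the two horizontal sections $\{b=\delta_1\}$ carries, besides the variational exponential $\exp\bigl(\int_\gamma\frac{\partial_a h}{h}\,da+\int_\gamma\frac{\partial_b g}{g}\,db\bigr)$, the crossing-speed ratio $g(a_\alpha,\cdot)/g(a_\omega,\cdot)$. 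These $g$-ratios cancel in the product, so your final claim $P_\epsilon'(s_0)\to e^{\lambda(s_0)}$ is right, but the cancellation is exactly the bookkeeping your sketch skips, and proving uniform $C^1$ control of the slow-passage derivative in the presence of the $b\,h$ contamination is nontrivial; as written the proposal is a plan, not a proof.

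For comparison, the paper never differentiates the entry--exit map at all. It only uses the $C^1$-smoothness of $\pi^{(1)}_\epsilon$ up to $\epsilon=0$ from Theorem~\ref{thm_entryexit2}, and then computes $\det DP_0$ via the Floquet-type Theorem~\ref{thm_floquet}: it shows that the divergence integral $\mu_\epsilon$ of \eqref{def_mu_ab} along the actual perturbed trajectory $\zeta_\epsilon$ converges to $\lambda(s_0)$. The bulk of the published proof is precisely the estimate you postpone, carried out by splitting $\zeta_\epsilon\cap\{b<\Delta\}$ into two corner pieces and a middle piece on which $b\le e^{-K/\epsilon}$, extracting $\ln\frac{f(a_1,0,0)}{f(a_0,0,0)}$ from the $\epsilon\,\partial_a f/(\epsilon f+b\,h)$ term and showing the $b\,\partial_a h$ and related terms contribute $O(\Delta)$ (estimates \eqref{est_f_Delta}--\eqref{est_h_Delta}), then letting $\Delta\to 0$. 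To complete your route you must either prove the analogous uniform derivative asymptotics for the slow-passage map (with the corner normalization factors made explicit, as above), or trade the derivative of the slow map for a divergence integral along the perturbed closed-up orbit, as the paper does; the existence/stability conclusion via the contraction (or inverse-map) argument and the period estimate are then routine, as you indicate.
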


\begin{remark}
\label{rmk_gh}
Condition \eqref{turning_h} is not essential
and was only provided for convenience.
Without assuming the positivity of $h$,
the results in Theorem \ref{thm_general}
still hold with
$\lambda$ defined 
using integrals in terms of the time variable $t$.
Also note that all integrals in \eqref{def_chi} and \eqref{def_lambda}
exist and are finite
under the assumption that $f$ and $h$ remain nonzero.
To see that the last integral of \eqref{def_lambda},
one way is to use the relation
$db/da=g(a,b,0)/h(a,b,0)$ from \eqref{fast_ab}
to obtain \begin{equation}\label{int_gh}
  \int_{\gamma(s)}\frac{\partial_bg(a,b,0)}{g(a,b,0)}\;db
  =\int_{\gamma(s)}\frac{\partial_bg(a,b,0)}{h(a,b,0)}\;da.
\end{equation}
This integral is finite
since the integrand in the last expression is bounded.
\end{remark}

\begin{remark}
The function $\chi$ defined by \eqref{def_chi}
is related to the {\em slow divergence integral}
studied by De Maesschalck and Dumortier \cite{DeMaesschalck:2010}.
In their work, the cyclicity of a relaxation oscillation
near $\gamma(s_0)$
is bounded by an algebraic expression of
the the multiplicity of $s_0$
as a zero of $\chi(s)$.
The function $\lambda(s)$
in the present work
is not necessarily equivalent to $\chi'(s)$
(in the sense of multiplication by a positive function).
When $\chi(s_0)=0$,
it can be shown that $\lambda(s_0)$ and $\chi'(s_0)$
have the same sign.
Therefore, the function $\lambda$
provides an essentially different approach
than that in \cite{DeMaesschalck:2010}
for determining the sign of $\chi'(s_0)$.
\end{remark}

\begin{remark}
The function $\chi(s)$ defined by \eqref{def_chi}
can be expressed in terms of integrals on $\gamma(s)$:
For any fixed $s\in I$,
let $(a,b)=(A(t),B(t))$
be a solution of \eqref{fast_ab} with trajectory $\gamma(s)$.
Then the relations $A(\infty)=a_{\omega}(s)$
and $A(-\infty)=a_{\alpha}(s)$ yield \begin{equation}\notag
  \int_{a_\omega(s)}^{a_\alpha(s)} \frac{g(a,0,0)}{f(a,0,0)}\;da
  =\int_{-\infty}^{\infty} \frac{g(a,0,0)}{f(a,0,0)}\;A'(t)\;dt
  =\int_{-\infty}^{\infty} \frac{g(a,0,0)}{f(a,0,0)}\;\frac{A'(t)}{B'(t)}B'(t)\;dt,
\end{equation}
and therefore \begin{equation}\notag
  \chi(s)
  = \int_{\gamma(s)} \frac{g(a,0,0)}{f(a,0,0)}\;da
  = \int_{\gamma(s)} \frac{g(a,0,0)}{f(a,0,0)}\;
  \frac{g(a,b,0)}{h(a,b,0)}\;db.
\end{equation}
These last two identities sometimes can be advantageous
for determining the sign of
$\chi(s)$
(see Theorem \ref{thm_1hump}).
\end{remark}

\begin{remark}
For the perspective of modeling,
if a planar system of the form \eqref{fast_ab}
possesses a line of equilibria
and a family of heteroclinic orbits connecting points on this line,
then by a suitable choice of $g(a,b)$
to control the signs of $\chi$ and $\lambda$
in \eqref{def_chi} and \eqref{def_lambda},
the perturbed system \eqref{deq_ab}
can have a relaxation oscillation at a prescribed location
with a prescribed local stability
(see Example \ref{ex_SIN2}).
\end{remark}

\begin{remark}
In the case that $\chi(s_0)=0$ and $\lambda(s_0)=0$,
Theorem~\ref{thm_general}
does not guarantee the existence of a periodic orbit near $\gamma(s_0)$.
However,
in the region filled by the family of heteroclinic orbits
given in $\mathrm{(\hyperref[cond_H]{H})}$,
by Theorem~\ref{thm_general}
all possible periodic orbits
must lie in
an arbitrarily small neighborhood
of $\bigcup_{\{s_0: \chi(s_0)=0\}}\gamma(s_0)$
regardless of the sign of $\lambda(s_0)$
for all sufficiently small $\epsilon>0$.
\end{remark}

\begin{proposition}
\label{prop_lambdaFH}
Assume the functions $f(a,b,\epsilon)$ and $h(a,b,\epsilon)$ in \eqref{deq_ab}
are separable functions of the form
\begin{equation}\label{fh_seperable}
  f(a,b,\epsilon)= a\,\tilde{f}(b,\epsilon)
  \;\;\text{and}\;\;
  h(a,b,\epsilon)= a\,\tilde{h}(b,\epsilon).
\end{equation}
Then $\lambda(s)$ defined in \eqref{def_lambda} equals
\begin{equation}\label{lambda_simple1}
  \lambda(s)
  = \int_{\gamma(s)}\frac{\partial_bg(a,b,0)}{g(a,b,0)}\;db.
\end{equation}
\end{proposition}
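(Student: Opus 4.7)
The plan is to substitute the separable forms \eqref{fh_seperable} into the definition \eqref{def_lambda} of $\lambda(s)$ and show that the first two terms cancel, leaving only the last integral.

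First I would treat the boundary term. Under \eqref{fh_seperable},
\[
  \ln\frac{f(a_\alpha(s),0,0)}{f(a_\omega(s),0,0)}
  = \ln\frac{a_\alpha(s)\,\tilde{f}(0,0)}{a_\omega(s)\,\tilde{f}(0,0)}
  = \ln\frac{a_\alpha(s)}{a_\omega(s)},
\]
where the cancellation uses that $\tilde{f}(0,0)\neq 0$, which is forced by \eqref{turning_f} together with $a_\alpha(s),a_\omega(s)\neq 0$.

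Next I would simplify the second integrand. Since $h(a,b,0)=a\,\tilde{h}(b,0)$, we have $\partial_a h(a,b,0)=\tilde{h}(b,0)$, and therefore
\[
  \frac{\partial_a h(a,b,0)}{h(a,b,0)}=\frac{1}{a},
\]
valid along $\gamma(s)$ because the invariant interval $(a_{\min},a_{\max})$ does not contain $0$ (otherwise \eqref{turning_f}, in the separable case $f=a\tilde{f}$, could not hold with constant sign). Parametrizing $\gamma(s)$ by $(A(t),B(t))$ with $A(-\infty)=a_\alpha(s)$ and $A(\infty)=a_\omega(s)$, I get
\[
  \int_{\gamma(s)}\frac{\partial_a h(a,b,0)}{h(a,b,0)}\,da
  = \int_{-\infty}^{\infty}\frac{A'(t)}{A(t)}\,dt
  = \ln a_\omega(s)-\ln a_\alpha(s)
  = -\ln\frac{a_\alpha(s)}{a_\omega(s)}.
\]

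Adding this to the boundary term gives $0$, and the conclusion \eqref{lambda_simple1} follows immediately. The only subtlety worth verifying carefully is that the integrand $1/a$ does not develop a singularity along $\gamma(s)$, but this is automatic from the sign constraint on $f$ combined with the separable structure, so there is no real obstacle; the result is essentially a bookkeeping calculation that exploits how nicely $\partial_a$ interacts with the factorization $h=a\tilde{h}$.
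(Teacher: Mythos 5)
Your proof is correct and follows essentially the same route as the paper: substitute the separable forms into \eqref{def_lambda}, observe that the boundary term equals $\ln\frac{a_\alpha(s)}{a_\omega(s)}$ while $\int_{\gamma(s)}\frac{\partial_a h}{h}\,da=\int_{\gamma(s)}\frac{1}{a}\,da=\ln\frac{a_\omega(s)}{a_\alpha(s)}$, and conclude by cancellation. The extra checks you include (that $\tilde f(0,0)\ne 0$ and that $0\notin(a_{\min},a_{\max})$ so $1/a$ is nonsingular along $\gamma(s)$) are sound refinements of the same bookkeeping argument.
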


\begin{proof}
Under condition \eqref{fh_seperable},
\begin{equation}\label{ln_ff}
  \ln \frac{f(a_\alpha(s),0,0)}{f(a_\omega(s),0,0)}
  =\ln \frac{a_\alpha(s)}{a_\omega(s)}
\end{equation}
and \begin{equation}\label{int_ahh}
  \int_{\gamma(s)}\frac{\partial_ah(a,b,0)}{h(a,b,0)}\;da
  =\int_{\gamma(s)}\frac{1}{a}\;da
  =\ln \frac{a_\omega(s)}{a_\alpha(s)}.
\end{equation}
Substitute \eqref{ln_ff} and \eqref{int_ahh} into \eqref{def_lambda},
we obtain \eqref{lambda_simple1}.
\end{proof}

\begin{remark}
Proposition~\ref{prop_lambdaFH}
is the case considered in \cite{Hsu:2019}.
\end{remark}

\begin{proposition}
\label{prop_lambdaH}
Assume that the function $h(a,b,\epsilon)$ in \eqref{deq_ab}
is independent of $a$, that is,
\begin{equation}\label{h_independent}
  h(a,b,\epsilon)= \tilde{h}(b,\epsilon).  
\end{equation}
Then $\lambda(s)$ defined in \eqref{def_lambda} equals
\begin{equation}\label{lambda_simple2}
  \lambda(a_0)
  = \ln\frac{f(a_\alpha(s),0,0)}{f(a_\omega(s),0,0)}
  + \int_{\gamma(a_0)}\frac{\partial_bg(a,b,0)}{g(a,b,0)}\;db.
\end{equation}
\end{proposition}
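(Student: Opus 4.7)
The plan is to substitute the hypothesis directly into the definition of $\lambda(s)$ and observe that one of the three terms vanishes. Under assumption \eqref{h_independent}, the function $h(a,b,0) = \tilde h(b,0)$ has no dependence on $a$, so $\partial_a h(a,b,0) \equiv 0$ on the domain of interest. Consequently the integrand $\partial_a h(a,b,0)/h(a,b,0)$ appearing in the middle term of \eqref{def_lambda} vanishes identically along the trajectory $\gamma(s)$. Before concluding that the integral is zero, I would quickly check that the integrand is actually well-defined, which follows from \eqref{turning_h}: the denominator $h(a,b,0)$ is strictly negative for $a\in(a_{\min},a_{\max})$ and $b\ge 0$, so there is no division-by-zero issue along $\gamma(s)$. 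With the middle integral eliminated, the remaining two terms of \eqref{def_lambda} are precisely the right-hand side of \eqref{lambda_simple2}, completing the argument.

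There is no real obstacle here; unlike Proposition~\ref{prop_lambdaFH}, which required the separable structure \eqref{fh_seperable} to cancel the logarithmic prefactor against the integral $\int_{\gamma(s)} a^{-1}\,da$, the present proposition is a one-line consequence of $\partial_a h\equiv 0$. I would present it as a short direct computation rather than invoking any auxiliary construction.
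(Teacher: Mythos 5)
Your argument is correct and coincides with the paper's own proof: assumption \eqref{h_independent} forces $\partial_a h\equiv 0$, so the middle integral in \eqref{def_lambda} vanishes and \eqref{lambda_simple2} follows immediately. The extra remark that \eqref{turning_h} keeps the integrand well-defined is a harmless addition but not needed.
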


\begin{proof}
The partial derivative $\partial_ah$
is identically zero under assumption \eqref{h_independent}.
Hence the second integral in \eqref{def_lambda} is zero.
\end{proof}

\begin{proposition}
\label{prop_lambdaG}
If $g(a,b,\epsilon)=\varphi(b)G(a,b,\epsilon)$
for some smooth function $\varphi$ and $G$
with $\varphi(0)\ne 0$,
then $\lambda$ defined in \eqref{def_lambda}
is equal to \begin{equation}\label{lambda_simpleG}
  \lambda(s)
  = \ln\frac{f(a_\alpha(s),0,0)}{f(a_\omega(s),0,0)}
  + \int_{\gamma(s)}\frac{\partial_ah(a,b,0)}{h(a,b,0)}\;da
  + \int_{\gamma(s)}\frac{\partial_bG(a,b,0)}{G(a,b,0))}\;db.
\end{equation}
\end{proposition}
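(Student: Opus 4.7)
The plan is to show that the factorization $g(a,b,\epsilon)=\varphi(b)G(a,b,\epsilon)$ makes the $\varphi$-contribution to the last integral in \eqref{def_lambda} vanish, so that only the $G$-contribution survives. The first two terms of $\lambda(s)$ in \eqref{def_lambda} do not involve $g$ at all, so they pass through to \eqref{lambda_simpleG} unchanged; the entire content of the proposition is therefore the identity
\begin{equation*}
  \int_{\gamma(s)}\frac{\partial_bg(a,b,0)}{g(a,b,0)}\,db
  =\int_{\gamma(s)}\frac{\partial_bG(a,b,0)}{G(a,b,0)}\,db.
\end{equation*}

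To establish this, I would first differentiate the product $g=\varphi(b)G(a,b,0)$ with respect to $b$ and divide by $g$ to obtain the pointwise decomposition
\begin{equation*}
  \frac{\partial_bg(a,b,0)}{g(a,b,0)}
  =\frac{\varphi'(b)}{\varphi(b)}+\frac{\partial_bG(a,b,0)}{G(a,b,0)},
\end{equation*}
valid wherever $\varphi(b)$ and $G(a,b,0)$ are both nonzero (which holds in a neighborhood of $\gamma(s)$ by the hypothesis $\varphi(0)\neq 0$ together with the implicit nonvanishing assumption on $g$ discussed in Remark~\ref{rmk_gh}). Integrating along $\gamma(s)$ then splits the left-hand integral into two pieces.

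The key step is that the $\varphi'/\varphi$ piece integrates to zero. Parameterize $\gamma(s)$ by a solution $(A(t),B(t))$ of \eqref{fast_ab}, so that $B(-\infty)=B(\infty)=0$ by assumption $\mathrm{(\hyperref[cond_H]{H})}$, since $(a_\alpha(s),0)$ and $(a_\omega(s),0)$ are the alpha- and omega-limit points. Then
\begin{equation*}
  \int_{\gamma(s)}\frac{\varphi'(b)}{\varphi(b)}\,db
  =\int_{-\infty}^{\infty}\frac{d}{dt}\ln|\varphi(B(t))|\,dt
  =\ln|\varphi(0)|-\ln|\varphi(0)|=0,
\end{equation*}
using $\varphi(0)\ne 0$ to guarantee that the boundary values are finite. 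Substituting the resulting identity into \eqref{def_lambda} yields exactly \eqref{lambda_simpleG}.

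I do not expect any serious obstacle: the argument is a one-line logarithmic-derivative computation together with the elementary observation that $b$ returns to the same value at both endpoints of $\gamma(s)$. The only point requiring a word of care is justifying that $\varphi(B(t))$ stays away from zero along the whole trajectory; this follows from continuity of $B(t)$ on the compact-plus-endpoint closure of $\gamma(s)$ and the hypothesis $\varphi(0)\neq 0$, combined with the standing assumption (as in Remark~\ref{rmk_gh}) that the integrals in $\lambda$ are well-defined, which forces $g$---and hence $\varphi$---to be nonzero on $\gamma(s)$.
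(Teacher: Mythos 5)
Your proposal is correct and follows essentially the same route as the paper: split $\partial_b g/g$ into $\varphi'/\varphi + \partial_b G/G$ via the product rule and observe that $\int_{\gamma(s)}\varphi'(b)/\varphi(b)\,db$ vanishes because both endpoints of $\gamma(s)$ have $b=0$ and $\varphi(0)\neq 0$. Your time-parameterization of $\gamma(s)$ is just a more explicit rendering of the paper's one-line evaluation $\ln\varphi(b)\big|_{b=0}^{0}=0$, so there is nothing further to add.
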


\begin{proof}
From the condition $g(a,b,\epsilon)=\varphi(b)G(a,b,\epsilon)$,
\begin{align*}
  &\int_{\gamma(s)}\frac{\partial_bg(a,b,0)}{g(a,b,0)}\;db
  =\int_{\gamma(s)}\frac{\partial_b[\varphi(b)G(a,b,0)]}{\varphi(b)G(a,b,0)}\;db
  \\
  &\qquad
  =\int_{\gamma(s)}\frac{\varphi'(b)G(a,b,0)+ \varphi(b)\partial_bG(a,b,0)}{\varphi(b)G(a,b,0)}\;db
  \\
  &\qquad
  =\int_{\gamma(s)}\frac{\varphi'(b)}{\varphi(b)}\;db
  +\int_{\gamma(s)}\frac{\partial_bG(a,b,0)}{G(a,b,0)}\;db
\end{align*}
Note that the $b$-coordinates of the endpoints of $\gamma(s)$ are $0$.
Since $\varphi(0)\ne 0$,
$\int_{\gamma(s)}\frac{\varphi'(b)}{\varphi(b)}\;db
=\ln \varphi(b)\big|_{b=0}^0=0$\,
and consequently
\begin{equation}\notag
  \int_{\gamma(s)}\frac{\partial_bg(a,b,0)}{g(a,b,0)}\;db
  =\int_{\gamma(s)}\frac{\partial_bG(a,b,0)}{G(a,b,0)}\;db.
\end{equation}
Therefore \eqref{def_lambda} yields \eqref{lambda_simpleG}.
\end{proof}

To prove Theorem~\ref{thm_general},
we recall the following two theorems from \cite{Hsu:2019}.

\begin{theorem}[Theorem 5.1 in \cite{Hsu:2019}]\label{thm_entryexit2}
Consider system \eqref{deq_ab},
where $f$, $g$ and $h$ are $C^{r+1}$ functions, $r\in \mathbb N$,
that satisfy \eqref{turning_f}--\eqref{turning_g}.
Assume that $a_0<0<a_1$ satisfies relation
\begin{equation}\label{entryexit_ab}
  \int_{a_1}^{a_0} \frac{g(a,0,0)}{f(a,0,0)}\;da=0
\end{equation}
and that there exist trajectories $\gamma_1$ and $\gamma_2$ of the limiting system \begin{equation}\label{fast_ab2}
  \dot{a}= b\, h(a,b,0),
  \quad
  \dot{b}= b\,g(a,b,0),
\end{equation} such that $(a_1,0)$ is the omega-limit point of $\gamma_1$
and $(a_2,0)$ is the alpha-limit point of $\gamma_2$.
Then for all sufficiently small $\delta_1>0$ and $\delta_2>0$,
there exists $\epsilon_0>0$ such that the following holds.
Let \begin{equation}\label{def_ain_ab}
  (a^\IN,\delta_1)
  = \gamma_1\cap \{b=\delta_1\}
  \quad\text{and}\quad
  (a^\OUT,\delta_1)
  = \gamma_2\cap \{b=\delta_1\}.
\end{equation} Let \begin{equation}\label{def_Sigma_ab}
  \Sigma^\IN
  = \{(a,\delta_1): |a-a^\IN|<\delta_2\}
  \quad\text{and}\quad
  \Sigma^\OUT
  = \{(a,\delta_1): |a-a^\OUT|<\frac{|a_1|}{2}\}.
\end{equation} 
Then the transition mapping from $\Sigma^\IN$ to $\Sigma^\OUT$ of \eqref{deq_ab}
is well-defined for $\epsilon\in (0,\epsilon_0]$,
and is $C^r$ up to $\epsilon=0$.
That is, there exists a $C^r$ function \begin{equation}\notag
  \pi_\epsilon(z): \Sigma^\IN\times [0,\epsilon_0]\to \Sigma^\OUT
\end{equation}
such that, for each $z\in \Sigma^\IN$ and $\epsilon\in (0,\epsilon_0]$,
$z$ and $\pi_\epsilon(z)$ are connected
by a trajectory of \eqref{deq_ab}, and \begin{equation}\notag
  \pi_0(z_0)= \pi_0(z_1)
  \quad
  \text{for $z_0=(a_0,\delta)$ and $z_1=(a_1,\delta)$ satisfies \eqref{entryexit_ab}.}
\end{equation}
The time span $T_{\epsilon,\delta_1}$
of the trajectory $\sigma_\epsilon$
connecting $z\in \Sigma^\IN$ and $\pi_\epsilon(z)$
satisfies \begin{equation}\label{est_Teps_ab2}
  T_{\epsilon,\delta_1}=\frac{1}{\epsilon}\left(
    \int_{a_0}^{a_1}\frac{1}{f(a,0,0)}\;da
    + o(1)
  \right)\quad \text{as }\epsilon\to 0.
\end{equation} Moreover, there exists $M>0$ such that for each $\Delta\in (0,\delta_1]$,
if we parameterize $\sigma_\epsilon\cap \{b<\Delta\}$
by $(a_\epsilon(t),b_\epsilon(t))$, $t\in [0,T_{\epsilon,\Delta}]$,
then there exists $\epsilon_\Delta>0$ satisfying
\begin{equation}\label{ineq_bint_Delta}
  \int_0^{T_{\epsilon,\Delta}}
  b_\epsilon(t)\; dt
  \le M \Delta
  \quad\forall\; \epsilon\in (0,\epsilon_\Delta].
\end{equation}
\end{theorem}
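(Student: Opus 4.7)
The proof strategy is a classical entry-exit (bifurcation-delay) analysis. On the line $\{b=0\}$, system \eqref{deq_ab} reduces to $\dot a = \epsilon f(a,0,0)$, so the slow variable $a$ drifts to the right at speed $O(\epsilon)$. The transverse direction is governed by $\dot b = b\,g(a,b,\epsilon)$; linearizing in $b$ gives instantaneous rate $g(a,0,0)$, which by \eqref{turning_g} is negative to the left of the turning point $\bar a$ (attracting slow manifold) and positive to the right (repelling). Fenichel's theorem provides a smooth slow manifold $b = m_\epsilon(a)= O(\epsilon)$ over any compact subinterval of $(a_{\min},\bar a)$. Trajectories entering $\Sigma^{\IN}$ at $a\approx a_0$ contract exponentially onto this manifold, then cross $\bar a$ and are eventually ejected back to $b=\delta_1$ at some $a\approx a_1$, with $a_1$ determined by the way-in/way-out relation \eqref{entryexit_ab}.

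The key computation is to eliminate time by dividing the two equations in \eqref{deq_ab}. While $b$ is small enough that $\epsilon f \gg |bh|$, one has
\begin{equation*}
  \frac{d\ln b}{da} \;=\; \frac{g(a,b,\epsilon)}{\epsilon f(a,b,\epsilon)+ b\, h(a,b,\epsilon)}
  \;=\; \frac{1}{\epsilon}\,\frac{g(a,0,0)}{f(a,0,0)}\;+\;O(1).
\end{equation*}
Integrating from $a\approx a_0$ to a current value $a$ yields
\begin{equation*}
  \ln\!\frac{b(a)}{\delta_1} \;=\; \frac{1}{\epsilon}\int_{a_0}^{a}\frac{g(s,0,0)}{f(s,0,0)}\,ds \;+\;O(1).
\end{equation*}
The integral is negative for $a\in(a_0,\bar a)$ and recovers its losses as $a$ moves past $\bar a$, vanishing exactly at $a=a_1$ by \eqref{entryexit_ab}. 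Thus $b$ remains exponentially small along the slow segment and returns to size $\delta_1$ precisely when $a = a_1 + o(1)$. After that, the trajectory follows a perturbation of the fast heteroclinic $\gamma_2$ to reach $\Sigma^{\OUT}$ in $O(1)$ time; this is the content of the map $\pi_\epsilon$. Combined with the $O(1/\epsilon)$ time spent in the $\delta_1$-tube, where $\dot a = \epsilon f(a,0,0)+o(\epsilon)$, this gives the time estimate \eqref{est_Teps_ab2}.

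For the bound \eqref{ineq_bint_Delta}, I change variables from $t$ to $a$ using $\dot a \geq c\epsilon$ on the portion of the trajectory with $b\le\Delta$, obtaining $\int b\,dt \le (c\epsilon)^{-1}\!\int b(a)\,da$. Inserting the explicit exponential approximation of $b(a)$ derived above, the integrand is of Laplace type: its mass concentrates near the entry segment (where $b$ drops from $\delta_1$ to $O(\epsilon)$) and near the exit segment (where it rises back to $\Delta$), each contributing $O(\epsilon\Delta/|g(\cdot,0,0)|)$. The exponentially small middle piece is negligible, and summing gives the claimed $M\Delta$ bound independent of $\epsilon$.

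The main obstacle is establishing the $C^r$ regularity of $\pi_\epsilon$ up to $\epsilon=0$. The candidate limit $\pi_0$ is precisely the fast-heteroclinic map $(a_0,\delta_1)\mapsto \pi_0(a_1,\delta_1)$ via $\gamma_2$, and the obvious estimates yield only $C^0$ convergence because of the singularly exponential dependence of $b$ on $a$ through the factor $1/\epsilon$. The standard remedy is a blow-up at the non-hyperbolic point $(\bar a,0,0)$ in $(a,b,\epsilon)$-space in the style of Krupa--Szmolyan: after desingularization, the vector field is hyperbolic and transverse to the relevant exit section, so $C^r$ dependence of the flow on initial data and parameters gives the conclusion once the transition through each blow-up chart is composed. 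Alternatively, one can differentiate the implicit entry-exit equation $r$ times after reduction to a normal form near $\bar a$, using the relation \eqref{entryexit_ab} to cancel the apparently singular $1/\epsilon$ factors. Either route ultimately reduces to showing that the constraint \eqref{entryexit_ab} is exactly what is needed to make the singular exponential contributions balance order by order in $\epsilon$.
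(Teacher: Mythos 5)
This theorem is not proved in the paper at all: it is quoted verbatim as Theorem 5.1 of \cite{Hsu:2019} and used as a black box in the proof of Theorem~\ref{thm_general}, so there is no in-paper argument to compare against; the relevant comparison is with the proof in \cite{Hsu:2019}, which rests on the geometric-singular-perturbation treatment of bifurcation delay developed in \cite{Hsu:2017} and on smoothness results for the entry--exit function of the type in \cite{DeMaesschalck:2016}. Your outline reproduces the correct classical skeleton: the way-in/way-out balance \eqref{entryexit_ab} locating the exit point, the $O(1/\epsilon)$ passage giving \eqref{est_Teps_ab2}, and concentration of $\int b\,dt$ near the entry and exit giving \eqref{ineq_bint_Delta}. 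At that heuristic level the proposal is sound.

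However, the two claims that carry the real content of the theorem are not established. First, the $C^r$ regularity of $\pi_\epsilon$ up to $\epsilon=0$ (which is exactly what the paper later needs to run the implicit-function/Floquet argument) is only gestured at. Blow-up at the single turning point $(\bar a,0,0)$ does not suffice: for $\epsilon=0$ the entire segment $\{b=0\}$ is a line of equilibria, and the difficulty is the delayed loss of stability along that whole segment, traversed in time $O(1/\epsilon)$; ``$C^r$ dependence of the flow on initial data and parameters'' cannot be invoked uniformly over such time spans, since derivatives of the flow a priori grow like $e^{C/\epsilon}$. One needs either the quantitative smoothness of the entry--exit map proved in \cite{DeMaesschalck:2016}, or the Fenichel-theory/exchange-type tracking of variational equations as in \cite{Hsu:2017}; your alternative of ``differentiating the implicit entry--exit relation $r$ times'' is precisely the step that requires proof, not a remedy. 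Second, your derivation of \eqref{ineq_bint_Delta} changes variables using $\dot a\ge c\epsilon$ on $\{b\le\Delta\}$, but this fails on the entry and exit portions: there $b$ is of order $\Delta$ while $\epsilon\to 0$, so $\dot a=\epsilon f+b\,h\approx \Delta\,h<0$ and $a$ is not even monotone (the same issue undermines treating $b$ globally as a function of $a$ near the entry). The estimate should instead be obtained from the exponential decay (resp.\ growth) of $b$ in $t$ at rate $\approx|g(a_0,0,0)|$ (resp.\ $g(a_1,0,0)$) near the endpoints, plus the exponential smallness $b\le e^{-K/\epsilon}$ on the middle segment --- this is exactly how the present paper handles the analogous pieces $\zeta^{(1)}_{\epsilon,\Delta}$, $\zeta^{(2)}_{\epsilon,\Delta}$, $\zeta^{(3)}_{\epsilon,\Delta}$ in its proof of Theorem~\ref{thm_general}.
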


The next theorem
is the variation of Floquet Theory.

\begin{theorem}[Theorem 5.2 in \cite{Hsu:2019}]
\label{thm_floquet}
Consider systems in $\mathbb R^N$, $N\ge 2$, 
of the form \begin{equation}\label{deq_z}
  \dot{z}= {h}_\epsilon({z}),
\end{equation}
where ${h}_\epsilon(z)=h(z,\epsilon)$
is a $C^2$ function of $({z},\epsilon)\in \mathbb R^N\times [0,\epsilon_0]$.
Let $z_0\in \mathbb R^N$ with $h(z_0,0)\ne 0$,
and let $\Sigma$ be a cross section of $z_0$
transversal to $h_0(z_0)$.
Assume that there exist $\epsilon_0>0$
and a neighborhood $\Sigma_{(1)}\subset \Sigma$ of $z_0$
such that the return map from $\Sigma_{(1)}$ to $\Sigma$
is well-defined for $\epsilon\in (0,\epsilon_0]$
and is $C^1$ up to $\epsilon=0$.
That is, there is a $C^1$ function
\begin{equation}\notag
  P_\epsilon(z): 
  \Sigma_{(1)}\times [0,\epsilon_0]\to \Sigma
\end{equation}
such that
for any $z\in \Sigma_{(1)}$ and $\epsilon\in (0,\epsilon_0]$
there is a trajectory of \eqref{deq_z}
that starts at $z\in \Sigma_{(1)}$
and returns to $\Sigma$ at $P_\epsilon(z)$.

Let $\zeta_\epsilon(t)$, $0\le t\le T_\epsilon$,
be a trajectory of \eqref{deq_z} that starts at $z_0$
and ends at $P_\epsilon(z_0)$.
Assume that \begin{equation}\notag
  \int_0^{T_\epsilon}
  \mathrm{div}(h_\epsilon(\zeta_\epsilon(t)))\;dt
  \to \lambda_0
  \quad\text{as }\epsilon\to 0
\end{equation}
for some $\lambda_0\in \mathbb R$.
Then \begin{equation}\label{eq_detDP}
  \det\big(DP_{0}(u_0)\big)
  = \exp(\lambda_0),
\end{equation}
where $DP_{\epsilon}(u_0)$ is regarded
as a linear transform on the tangent space $T_{z_0}\Sigma$.
\end{theorem}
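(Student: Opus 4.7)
The strategy is to combine Liouville's formula for the variational equation along $\zeta_\epsilon$ with the block-triangular factorization relating the flow derivative and the Poincar\'e-section derivative, then pass to the limit $\epsilon\to 0$ using the hypothesized $C^1$ convergence of $P_\epsilon$ and of the divergence integral. This generalizes the classical identity $\det DP=\exp\!\int\mathrm{div}$ for closed orbits to return maps that only admit a $C^1$ limit as $\epsilon\to 0$.

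Denote by $\phi_\epsilon^t$ the local flow of $\dot z=h_\epsilon(z)$ and by $T_\epsilon=\tau(z_0,\epsilon)$ the first-return time to $\Sigma$ from $z_0$, so that $u_\epsilon:=\phi_\epsilon^{T_\epsilon}(z_0)=P_\epsilon(z_0)$. The first ingredient is Abel--Liouville: since $Y(t)=D\phi_\epsilon^t(z_0)$ solves the variational equation $\dot Y=Dh_\epsilon(\zeta_\epsilon(t))\,Y$ with $Y(0)=\mathrm{Id}$,
\begin{equation}\notag
  \det D\phi_\epsilon^{T_\epsilon}(z_0)=\exp\!\left(\int_0^{T_\epsilon}\mathrm{div}\,h_\epsilon(\zeta_\epsilon(t))\,dt\right),
\end{equation}
which by hypothesis tends to $\exp(\lambda_0)$ as $\epsilon\to 0$. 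The second ingredient relates this to $DP_\epsilon$. Differentiating the semigroup identity $\phi_\epsilon^{s+t}=\phi_\epsilon^s\circ\phi_\epsilon^t$ in $s$ at $s=0$ gives the intertwining $D\phi_\epsilon^{T_\epsilon}(z_0)\,h_\epsilon(z_0)=h_\epsilon(u_\epsilon)$, while differentiating $P_\epsilon(z)=\phi_\epsilon^{\tau(z,\epsilon)}(z)$ along $v\in T_{z_0}\Sigma$ gives
\begin{equation}\notag
  DP_\epsilon(z_0)v=D\phi_\epsilon^{T_\epsilon}(z_0)v+h_\epsilon(u_\epsilon)\,D\tau(z_0,\epsilon)v,
\end{equation}
so $DP_\epsilon(z_0)v$ is the component of $D\phi_\epsilon^{T_\epsilon}(z_0)v$ in $T_{u_\epsilon}\Sigma$ under the splitting $T_{u_\epsilon}\mathbb R^N=T_{u_\epsilon}\Sigma\oplus\mathbb R\,h_\epsilon(u_\epsilon)$.

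In the adapted bases $B_0=\{h_\epsilon(z_0),v_2,\ldots,v_N\}$ at $z_0$ and $B_1=\{h_\epsilon(u_\epsilon),w_2,\ldots,w_N\}$ at $u_\epsilon$, the two identities above force the matrix of $D\phi_\epsilon^{T_\epsilon}(z_0)$ to be block upper-triangular with entry $1$ in the top-left corner and $[DP_\epsilon(z_0)]_{\{w_j\},\{v_j\}}$ in the lower-right $(N-1)\times(N-1)$ block. Equating the adapted-basis determinant to $(\det V_0/\det V_1)\det D\phi_\epsilon^{T_\epsilon}(z_0)$, where $V_0,V_1$ are the change-of-basis matrices whose columns are the vectors of $B_0,B_1$ in standard coordinates, gives
\begin{equation}\notag
  \det DP_\epsilon(z_0)=\frac{\det[\,h_\epsilon(z_0)\,|\,v_2\,|\cdots|\,v_N\,]}{\det[\,h_\epsilon(u_\epsilon)\,|\,w_2\,|\cdots|\,w_N\,]}\,\exp\!\left(\int_0^{T_\epsilon}\mathrm{div}\,h_\epsilon(\zeta_\epsilon(t))\,dt\right).
\end{equation}
Passing $\epsilon\to 0$, using $C^1$ convergence $P_\epsilon\to P_0$ (so $u_\epsilon\to u_0$) and the continuity of $h_\epsilon$, and choosing $\{v_j\},\{w_j\}$ so that the change-of-basis prefactor equals $1$ at $\epsilon=0$---the content of the convention that $DP_\epsilon(u_0)$ is regarded as a linear transform on $T_{z_0}\Sigma$---yields $\det DP_0(u_0)=\exp(\lambda_0)$.

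The principal technical point is the identification of $T_{z_0}\Sigma$ with $T_{u_0}\Sigma$, since $P_0$ need not fix $z_0$ and so $\det DP_0(u_0)$ is only defined once such an identification (equivalently, a volume form on $\Sigma$) has been chosen. The theorem absorbs this choice into the quoted convention; in the classical special case $u_0=z_0$, corresponding to a limiting closed orbit, the identification is automatic and the change-of-basis prefactor trivially tends to $1$. Once this bookkeeping is settled the remainder of the argument is a direct application of Liouville's formula combined with the assumed convergences, and the $C^2$ regularity of $h_\epsilon$ in $(z,\epsilon)$ suffices to justify differentiation under the flow and termwise passage to the limit.
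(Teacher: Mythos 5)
This statement is not proved in the paper at all: it is quoted verbatim as Theorem 5.2 of \cite{Hsu:2019}, so there is no in-paper argument to compare against. Your proposal is the standard and correct route to such a statement: for each fixed $\epsilon>0$ the Abel--Liouville formula gives $\det D\phi_\epsilon^{T_\epsilon}(z_0)=\exp\bigl(\int_0^{T_\epsilon}\mathrm{div}\,h_\epsilon(\zeta_\epsilon(t))\,dt\bigr)$, the flow-adapted bases $\{h_\epsilon(z_0),v_2,\dots,v_N\}$ and $\{h_\epsilon(u_\epsilon),w_2,\dots,w_N\}$ make the monodromy block-triangular with the section map in the lower block, and the hypotheses ($C^1$ dependence of $P_\epsilon$ up to $\epsilon=0$, convergence of the divergence integral, continuity of $h$) let you pass to the limit on both sides; this is exactly the classical Poincar\'e-map/Floquet computation adapted to a family with $T_\epsilon\to\infty$, and it is the argument one expects the cited source to use. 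Your one flagged caveat, the identification of $T_{z_0}\Sigma$ with $T_{u_0}\Sigma$ needed to make $\det DP_0$ well defined when $P_0(z_0)\ne z_0$, is genuine but harmless here: in the only place the theorem is invoked (the proof of Theorem~\ref{thm_general}) the base point is a fixed point of $P_0$ on the section $\{b=\delta_1\}$, so the identification is canonical and your change-of-basis prefactor tends to $1$, as you note; it would strengthen the write-up to also record that transversality of the flow to $\Sigma$ at the return point (implicit in the assumed $C^1$ return map) is what justifies differentiating the return time $\tau(z,\epsilon)$ in your splitting identity.
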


Now we use Theorems~\ref{thm_entryexit2} and~\ref{thm_floquet}
to prove Theorem~\ref{thm_general}.

\begin{proof}[Proof of Theorem \ref*{thm_general}]
The proof for the case with $\chi(s_0)\ne 0$
is similar to
that in \cite[Theorem 2.1]{Hsu:2019},
so we omit it here.
In this case, there is a neighborhood $U$ of $z_1$
such that no periodic orbit of \eqref{deq_ab}
intersects $U$ for any sufficiently small $\epsilon>0$,

Assume $\chi(s_0)=0$ and $\lambda(s_0)\ne 0$.
Let $a_0=a_\omega(s_0)$ and $a_1=a_\alpha(s_0)$.
The condition $\chi(a_0)=0$ means that \eqref{entryexit_ab} holds.
Let $(a^\IN,\delta_1)$, $(a^\OUT,\delta_1)$,
$\Sigma^\IN$ and $\Sigma^\OUT$
be the points and segments defined in \eqref{def_ain_ab} and \eqref{def_Sigma_ab}.
By Theorem~\ref{thm_entryexit2},
The transition map $\pi^{(1)}_\epsilon:\Sigma^\IN\to \Sigma^{\OUT}$, $\epsilon\in (0,\epsilon_0]$
is well defined and is $C^1$ up to $\epsilon=0$.
Let \begin{equation}\notag
  \widetilde{\Sigma}^{\IN}
  =\left\{(a,\delta_1): a\in (-\infty,\bar{a})\right\}.
\end{equation}
Since \eqref{deq_ab} is a regular perturbation of \eqref{fast_ab}
in the region $\{(a,b): b\ge \delta_1\}$,
the transition map $\pi^{(2)}_\epsilon:\Sigma^{\OUT}\to \widetilde{\Sigma}^{\IN}$
for system \eqref{deq_ab}
is well-defined and is smooth for $\epsilon\in [0,\epsilon_0]$.
Since $(a^\IN,\delta),(a^\OUT,\delta)\in \gamma(s_0)$,
we have $\pi_0^{(2)}(a^\OUT,\delta_1)= (a^\IN,\delta_1)$.

Let $P_\epsilon=\pi_\epsilon^{(2)}\circ \pi_\epsilon^{(1)}$.
Then $P_0(a^\IN,\delta_1)= \pi_0^{(2)}(a^\OUT,\delta_1)= (a^\IN,\delta_1)$.
That is, $(a^\IN,\delta_1)$ is a fixed point of $P_0$.
To show that this fixed point persists for small $\epsilon>0$,
by the implicit function theorem it suffices to show that
the Jacobian matrix of the return map $P_0$ evaluated at $(a^\IN,\delta)$
is non-singular.
Let \begin{equation}\label{def_mu_ab}
  \mu_\epsilon
  \equiv \int_{\zeta_\epsilon}
  \mathrm{div}\begin{pmatrix}
    \epsilon f(a,b,\epsilon)+ b\,h(a,b,\epsilon)\\ b\, g(a,b,\epsilon)
  \end{pmatrix}\;dt,
\end{equation}
where $\zeta_\epsilon$ is the trajectory of \eqref{deq_ab}
that starts at $(a^\IN,\delta)$ and ends at $P_\epsilon(a^\IN,\delta)$.
By Theorem~\ref{thm_floquet},
it suffices to show that $\mu_\epsilon$
approaches a nonzero value as $\epsilon\to 0$.

Using the equation for $\dot{a}$ in \eqref{deq_ab},
\begin{equation}\label{zeta_af}
  \int_{\zeta_\epsilon} \partial_a \big(\epsilon f(a,b,\epsilon)+b\,h(a,b,\epsilon)\big)\;dt
  =\int_{\zeta_\epsilon} \frac{\partial_a \big(\epsilon f(a,b,\epsilon)+b\,h(a,b,\epsilon)\big)}
  {\epsilon f(a,b,\epsilon)+b\,h(a,b,\epsilon)}\;da.
\end{equation}
For any fixed $\Delta\in (0,\delta_1)$,
since $b\,h(a,b,0)$ is bounded away from zero
on ${\zeta_\epsilon\cap \{b>\Delta\}}$,
by regular perturbation theory we have \begin{equation}\notag
  \lim_{\epsilon\to 0}
  \int_{\zeta_\epsilon\cap \{b>\Delta\}}
  \frac{\partial_a \big(\epsilon f(a,b,\epsilon)+b\,h(a,b,\epsilon)\big)}
  {\epsilon f(a,b,\epsilon)+b\,h(a,b,\epsilon)}\;da
  = \int_{\gamma(s_0)\cap \{b> \Delta\}}
  \frac{\partial_ah(a,b,0)}{h(a,b,0)}\;da.
\end{equation}
Hence
\begin{equation}\label{zeta_est1a}
  \lim_{\epsilon\to 0}
  \left|
  \int_{\zeta_\epsilon\cap \{b>\Delta\}}
  \frac{\partial_a \big(\epsilon f(a,b,\epsilon)+b\,h(a,b,\epsilon)\big)}
  {\epsilon f(a,b,\epsilon)+b\,h(a,b,\epsilon)}\;da
  - \int_{\gamma(s_0)}
  \frac{\partial_ah(a,b,0)}{h(a,b,0)}\;da
  \right|
  \le C\Delta.
\end{equation}
Here and in the rest of the proof we use $C$
to denote constants independent of $\epsilon$ and $\Delta$.
Next we claim that \begin{equation}\label{est_f_Delta}
  \limsup_{\epsilon\to 0}
  \left|
  \int_{\zeta_\epsilon\cap \{b<\Delta\}}
  \frac{\epsilon\,\partial_af(a,b,\epsilon)}
  {\epsilon f(a,b,\epsilon)+b\,h(a,b,\epsilon)}\;da
  - \ln\frac{f(a_1,0,0)}{f(a_0,0,0)}
  \right|
  \le C\Delta
\end{equation}
and that
\begin{equation}\label{est_h_Delta}
  \limsup_{\epsilon\to 0}
  \left|
    \int_{\zeta_\epsilon\cap \{b<\Delta\}}
    \frac{b\,\partial_a h(a,b,\epsilon)}
    {\epsilon f(a,b,\epsilon)+b\,h(a,b,\epsilon)}\;da
  \right|
  \le C\Delta.
\end{equation} 
We write
$\zeta_\epsilon\cap \{b<\Delta\}
=\zeta_{\epsilon,\Delta}^{(1)}
\cup \zeta_{\epsilon,\Delta}^{(2)}
\cup \zeta_{\epsilon,\Delta}^{(3)}$
by \begin{align}
  &\zeta_{\epsilon,\Delta}^{(1)}
  =\zeta_\epsilon\cap \{b<\Delta\}\cap \{a<a_0+\Delta\},
  \notag
  \\
  &\zeta_{\epsilon,\Delta}^{(2)}
  =\zeta_\epsilon\cap \{b<\Delta\}\cap \{a_0+\Delta<a<a_1+\Delta\},
  \notag
  \\
  &\zeta_{\epsilon,\Delta}^{(3)}
  =\zeta_\epsilon\cap \{b<\Delta\}\cap \{a>a_1-\Delta\}.
  \notag
\end{align}
It can be shown (from the proof of \cite[Theorem 2.1]{Hsu:2019})
that, for some $K=K(\Delta)>0$
independent of $\epsilon$, \begin{equation}\label{est_b_Delta}
  b<e^{-K/\epsilon}
  \quad\text{on}\;\;
  \zeta_{\epsilon,\Delta}^{(2)}.
\end{equation}
Since $f(a,0,0)$ is bounded away from zero,
\eqref{est_b_Delta} gives \begin{align*}
  &\lim_{\epsilon\to 0}
  \int_{\zeta_{\epsilon,\Delta}^{(2)}}
  \frac{\epsilon\, \partial_af(a,b,\epsilon)}
  {\epsilon f(a,b,\epsilon)+b\,h(a,b,\epsilon)}\;da
  =
  \lim_{\epsilon\to 0}\int_{\zeta_{\epsilon,\Delta}^{(2)}}
  \frac{\partial_af(a,b,\epsilon)}
  {f(a,b,\epsilon)+O(e^{-K/\epsilon}/\epsilon)}\;da
  \\
  &\qquad
  =\int_{a_0+\Delta}^{a_1-\Delta}
  \frac{\partial_af(a,0,0)}
  {f(a,0,0)}\;da
  = \ln\frac{f(a_1-\Delta,0,0)}{f(a_0+\Delta,0,0)}.
\end{align*}
Hence \begin{equation}\label{est_f_zeta2}
  \lim_{\epsilon\to 0}
  \left|
  \int_{\zeta_{\epsilon,\Delta}^{(2)}}
  \frac{\epsilon\,\partial_af(a,b,\epsilon)}
  {\epsilon f(a,b,\epsilon)+b\,h(a,b,\epsilon)}\;da
  -
  \ln\frac{f(a_1,0,0)}{f(a_0,0,0)}
  \right|
  \le C\Delta.
\end{equation}
On the other hand,
since $g(a_0,0,0)<0$
and $\zeta_{\epsilon,\Delta}^{(1)}$
lies in a neighborhood of $(a_0,0)$,
for any fixed positive number $\beta< |g(a_0,0,0)|$
we have
$b(t)\le Ce^{-\beta t}$ on $\zeta_{\epsilon,\Delta}^{(1)}$
if $(a(0),b(0))$ is the entry point of $\zeta_{\epsilon,\Delta}^{(1)}$.
Since $f(a,0,0)$ is bounded away from zero,
the equation $\dot{a}=\epsilon f+b\,h$
and the estimate $b(t)\le Ce^{-\beta t}$
imply that the time span $T_\epsilon^{(1)}$
of $\zeta_{\epsilon,\Delta}^{(1)}$
satisfies $T_\epsilon^{(1)}\le C\Delta/\epsilon$.
Therefore
\begin{equation}\label{est_f_zeta1}
  \left|
    \int_{\zeta_{\epsilon,\Delta}^{(1)}}
    \frac{\epsilon\,\partial_af(a,b,\epsilon)}
    {\epsilon f(a,b,\epsilon)+b\,h(a,b,\epsilon)}\;da
  \right|
  = \left|
    \int_0^{T_\epsilon^{(1)}}
    \epsilon\,\partial_af(a,b,\epsilon)
    \;dt
  \right|
  \le C\Delta.
\end{equation}
Similarly,
\begin{equation}\label{est_f_zeta3}
  \left|
    \int_{\zeta_{\epsilon,\Delta}^{(3)}}
    \frac{\epsilon\,\partial_af(a,b,\epsilon)}
    {\epsilon f(a,b,\epsilon)+b\,h(a,b,\epsilon)}\;da
  \right|
  \le C\Delta.
\end{equation}
Combining estimates
\eqref{est_f_zeta2}, \eqref{est_f_zeta1} and \eqref{est_f_zeta3},
we obtain \eqref{est_f_Delta}.

Since $f(a,0,0)$ is bounded away from zero,
\eqref{est_b_Delta} gives \begin{equation}\label{est_h_zeta2}
  \int_{\zeta_{\epsilon,\Delta}^{(2)}}
  \frac{b\; \partial_ah(a,b,\epsilon)}
  {\epsilon f(a,b,\epsilon)+b\,h(a,b,\epsilon)}\;da
  =
  \int_{\zeta_{\epsilon,\Delta}^{(2)}}
  \frac{O(e^{-K/\epsilon}/\epsilon)}
  {f(a,b,\epsilon)+O(e^{-K/\epsilon}/\epsilon)}\;da
  \to 0
\end{equation}
as $\epsilon\to 0$.
On the other hand,
by the equations for $\dot{a}$ and $\dot{b}$ in \eqref{fast_ab},
\begin{equation}\notag
  \int_{\zeta_{\epsilon,\Delta}^{(1)}}
  \frac{b\,\partial_a h(a,b,\epsilon)}
  {\epsilon f(a,b,\epsilon)+b\,h(a,b,\epsilon)}\;da
  =\int_{\zeta_{\epsilon,\Delta}^{(1)}}
  \frac{b\,\partial_a h(a,b,\epsilon)}
  {b\,g(a,b,\epsilon)}\;db
  =\int_{\zeta_{\epsilon,\Delta}^{(1)}}
  \frac{\partial_a h(a,b,\epsilon)}
  {g(a,b,\epsilon)}\;db.
\end{equation}
Since $g(a_0,0,0)\ne 0$
and $\mathrm{length}\big(\zeta_{\epsilon,\Delta}^{(1)}\big)
\le C\Delta$,
it follows that
\begin{equation}\label{est_h_zeta1}
  \left|
  \int_{\zeta_{\epsilon,\Delta}^{(1)}}
  \frac{b\,\partial_a h(a,b,\epsilon)}
  {\epsilon f(a,b,\epsilon)+b\,h(a,b,\epsilon)}\;da
  \right|
  = \left|
    \int_{\zeta_{\epsilon,\Delta}^{(1)}}
    \frac{\partial_a h(a,b,\epsilon)}
    {g(a,b,\epsilon)}\;db
  \right|
  \le C\Delta.
\end{equation}
Similarly, from $g(a_1,0,0)\ne 0$, \begin{equation}\label{est_h_zeta3}
  \left|
  \int_{\zeta_{\epsilon,\Delta}^{(3)}}
  \frac{b\,\partial_a h(a,b,\epsilon)}
  {\epsilon f(a,b,\epsilon)+b\,h(a,b,\epsilon)}\;da
  \right|
  \le C\Delta.
\end{equation}
Combining \eqref{est_h_zeta2}, \eqref{est_h_zeta1} and \eqref{est_h_zeta3},
we obtain \eqref{est_h_Delta}.

By \eqref{zeta_est1a}, \eqref{est_f_Delta} and \eqref{est_h_Delta},
\begin{equation}\notag
  \limsup_{\epsilon\to 0}
  \left|
  \int_{\zeta_\epsilon} \partial_a 
  \big(\epsilon f+b\,h\big)\;dt
  -\ln\frac{f(a_1,0,0)}{f(a_0,0,0)}
  - \int_{\gamma(s_0)}\frac{\partial_ah(a,b,0)}{h(a,b,0)}\;da
  \right|
  \le C\Delta,
\end{equation}
where $(\epsilon f+b\,h)$
is evaluated at $(a,b,\epsilon)$.
Since $\Delta$ can be arbitrarily small,
we obtain
\begin{equation}\label{zeta_est1}
  \lim_{\epsilon\to 0}
  \int_{\zeta_\epsilon} \partial_a
  \big(\epsilon f(a,b,\epsilon)+b\,h(a,b,\epsilon)\big)\;dt
  =\ln\frac{f(a_1,0,0)}{f(a_0,0,0)}
  + \int_{\gamma(s_0)}\frac{\partial_ah(a,b,0)}{h(a,b,0)}\;da.
\end{equation}

A similar calculation gives \begin{equation}\label{zeta_est2}
  \lim_{\epsilon\to 0}
  \int_{\zeta_\epsilon} \partial_b \big(b\,g(a,b,\epsilon)\big)\;dt
  = \int_{\gamma(s_0)} 
  \frac{\partial_bg(a,b,0)}
  {h(a,b,0)}\;da.
\end{equation}
By \eqref{zeta_est1} and \eqref{zeta_est2} we conclude that
the number $\mu_\epsilon$ defined by \eqref{def_mu_ab} satisfies
\begin{equation}\notag
  \lim_{\epsilon\to 0}
  \mu_\epsilon
  = \ln\frac{f(a_1,0,0)}{f(a_0,0,0)}
  + \int_{\gamma(s_0)}\frac{\partial_ah(a,b,0)+\partial_bg(a,b,0)}{h(a,b,0)}\;da.
\end{equation}
Using the relation $db/da=g(a,b,0)/h(a,b,0)$ from \eqref{fast_ab},
it follows that $\lim_{\epsilon\to 0}\mu_\epsilon=\lambda(s_0)$
with $\lambda$ defined by \eqref{def_lambda}.

By assumption, $\lambda(s_0)\ne 0$,
by Theorem~\ref{thm_floquet}
the return map $P_\epsilon$ has a unique fixed point near $(a^\IN,\delta_1)$
for all small $\epsilon>0$,
and $P_\epsilon$ is a contraction if $\lambda(s_0)<0$,
and an expansion if $\lambda(s_0)>0$.
Hence there is a unique periodic orbit
$\ell_\epsilon$
of \eqref{deq_ab}
near $\gamma(s_0)$
for every small $\epsilon>0$.
This periodic orbit is locally orbitally asymptotically stable if $\lambda(s_0)<0$,
and is unstable if $\lambda(s_0)>0$.
The estimate \eqref{est_Teps_ab} follows from \eqref{est_Teps_ab2}.
\end{proof}

\section{The Chemostat Predator-Prey System}
\label{sec_predprey}

The restriction of system \eqref{deq_predprey} on
the invariant plane $\Lambda$ is
governed by $S=S^0-\rho x-c\rho y$
and
\begin{equation}\label{deq_predprey_xy}\begin{aligned}
  \dot{x}
  &= x\big(-\epsilon+ m(S^0-\rho x-c\rho y)\big)- cyp(x)
  \\
  &\equiv
  c\left(\rho m x+p(x)\right)\,\big(F_\epsilon(x)-y\big),
  \\
  \dot{y}
  &= y\big(-\epsilon+p(x)\big),
\end{aligned}\end{equation}
where \begin{equation}\label{def_Feps}
  F_\epsilon(x)
  = \frac{x\left(-\epsilon+mS^0-\rho mx\right)}
  {c\left(\rho mx+p(x)\right)}.
\end{equation}
Define $F(x)=F_0(x)$, that is \begin{equation}\label{predprey_F0}
  F(x)= \frac{x(mS^0-\rho mx)}{c\left(\rho mx+p(x)\right)},
\end{equation}
and define \begin{equation}\notag
  \bar{y}=F(0)=\lim_{x\to 0}F(x)=\frac{mS^0}{c(\rho m+p'(0))}>0.
\end{equation}
The last inequality follows from $p'(0)> 0$ in condition~\eqref{cond_p}.
Note that $F_\epsilon(x)$
is continuous at $(\epsilon,x)=(0,0)$
by setting $F_\epsilon(0)=\frac{-\epsilon+mS^0}{c(\rho m+p'(0))}$.
Also note that $F(x)>0$ for all $x\in (0,S^0/\rho)$,
and that $F(S^0/\rho)=0$.

When $\epsilon=0$, system \eqref{deq_predprey_xy} 
reduces to \begin{equation}\label{fast_predprey}\begin{aligned}
  &\dot{x}= c\left(\rho m x+p(x)\right)\,\big(F(x)-y\big),
  \\
  &\dot{y}= yp(x).
\end{aligned}\end{equation}
Since $p(0)=0$ and $F(S^0/\rho)=0$,
system \eqref{fast_predprey}
has a line of equilibria $\{x=0\}$
and an isolated saddle equilibrium $E_1=(S^0/\rho,0)$.
The unstable manifold of $E_1$
is the portion of the line $\{x+cy=S^0/\rho\}$.
Given any $x_0\in (0,S^0/\rho)$,
since $p(x)>0$ for all $x\in (0,S^0/\rho)$,
both the forward and backward trajectories
starting at $(x,y)=(x_0,F(x_0))$
approach points on the
boundary of $\Lambda$
(see Figure~\ref{fig_predprey_H2gamma}).
This gives a continuous family of heteroclinic orbits
parameterized by $x_0\in (0,S^0/\rho)$
that fills the region
in the positive quadrant
bounded below by the unstable manifold of $E_1$.

Denote the trajectory passing through $(x_0,F(x_0))$ by $\gamma(x_0)$,
and the $y$-values
at the omega- and alpha-limit points of $\gamma(x_0)$
by $y_\omega(x_0)$ and $y_\alpha(x_0)$, respectively.
Denote the segment 
connecting the two endpoints of $\gamma(x_0)$ by $\sigma(x_0)$.
Then $\gamma(x_0)\cup \sigma(x_0)$
is a singular closed orbit for each $x_0\in (0,S^0/\rho)$

Applying Theorem~\ref{thm_general} to system \eqref{deq_predprey_xy}
with $(-y,x)$ playing the role of $(a,b)$,
the formula \eqref{def_chi} gives,
up to multiplication by positive constants,
\begin{equation}\label{chi_predprey}
  \chi(x_0)
  = \int_{y_\alpha(x_0)}^{y_\omega(x_0)}
  \frac{y-\bar{y}}{y}\;dy.
\end{equation}

By Propositions~\ref{prop_lambdaFH} and~\ref{prop_lambdaH},
the formula \eqref{def_lambda} gives, 
up to multiplication by positive constants,
\begin{equation}\label{lambda_predprey}
  \lambda(x_0)
  =\int_{\gamma(x_0)}
  \frac{\partial_x\big[F(x)-y\big]}
  {F(x)-y}\;dy
  =\int_{\gamma(x_0)}
  \frac{F'(x)}{F(x)-y}\;dy.
\end{equation}

\begin{theorem}
\label{thm_predprey}
Assume that $x_0\in (0,S^0/\rho)$ satisfies $\chi(x_0)=0$ and $\lambda(x_0)\ne 0$,
where $\chi$ and $\lambda$ are defined in \eqref{chi_predprey} and \eqref{lambda_predprey}.
Then for any sufficiently small $\epsilon>0$,
there is a periodic orbit ${\ell}_\epsilon$ of \eqref{deq_predprey}
in a $O(\epsilon)$-neighborhood of $\Gamma(x_0)=\gamma(x_0)\cup \sigma(x_0)$.
The minimal period of $\ell_\epsilon$, denoted by $T_\epsilon$, satisfies \begin{equation}\label{est_Teps}
  T_\epsilon= \frac{1}{\epsilon}\Big(
    \ln\left(\frac{y_\omega(x_0)}{y_\alpha(x_0)}\right)+o(1)
  \Big)
  \quad\text{as }\epsilon\to 0.
\end{equation}
Moreover, 
${\ell}_\epsilon$ is locally orbitally asymptotically stable if $\lambda(x_0)<0$,
and is orbitally unstable if $\lambda(x_0)>0$.
Conversely, 
if $\chi(x_0)\ne 0$,
then for any point $z_1$ in the interior of the trajectory $\gamma(x_0)$,
there is a neighborhood $U$ of $z_1$
such that no periodic orbit of \eqref{deq_predprey}
intersects $U$ for any sufficiently small $\epsilon>0$.
\end{theorem}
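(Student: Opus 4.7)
The plan is to apply Theorem~\ref{thm_general} to the planar system \eqref{deq_predprey_xy}, which governs the dynamics of \eqref{deq_predprey} on the invariant, globally attracting simplex $\Lambda$. The characteristic functions $\chi$ and $\lambda$ in \eqref{chi_predprey} and \eqref{lambda_predprey} have already been derived in this section, so what remains is to verify the hypotheses of Theorem~\ref{thm_general}, to translate its conclusions, and to lift the result from $\Lambda$ back to $\mathbb R^3_+$.

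First I would set $(a,b)=(-y,x)$, as announced in the paragraph preceding \eqref{chi_predprey}, so that \eqref{deq_predprey_xy} takes the form \eqref{deq_ab} with
\begin{equation*}
  f(a,b,\epsilon)=-a,\qquad h(a,b,\epsilon)=a\,\frac{p(b)}{b},\qquad g(a,b,\epsilon)=c\left(\rho m+\frac{p(b)}{b}\right)\bigl(F_\epsilon(b)+a\bigr),
\end{equation*}
where the apparent singularity at $b=0$ is removable because $p(b)/b\to p'(0)>0$. The turning-point conditions are then direct: $f(a,0,0)=-a>0$ on $(a_{\min},a_{\max})\subset(-\infty,0)$; $h(a,b,0)=a\,p(b)/b<0$ for $a<0$ and $b>0$; and $g(a,0,0)=c(\rho m+p'(0))(a+\bar y)$ changes sign exactly at $\bar a=-\bar y$. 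The continuous family of heteroclinic orbits of \eqref{fast_predprey} described earlier in this section, parametrised by $x_0\in(0,S^0/\rho)$ and connecting $(0,y_\alpha(x_0))$ to $(0,y_\omega(x_0))$, supplies the family required in $\mathrm{(\hyperref[cond_H]{H})}$, with $a_\alpha(x_0)=-y_\alpha(x_0)$ and $a_\omega(x_0)=-y_\omega(x_0)$.

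With the hypotheses verified, Theorem~\ref{thm_general} immediately yields the existence of the periodic orbit $\ell_\epsilon$ of \eqref{deq_predprey_xy} in an $O(\epsilon)$-neighborhood of $\Gamma(x_0)$, the stability dichotomy dictated by the sign of $\lambda(x_0)$, and the local nonexistence statement whenever $\chi(x_0)\ne 0$. The period estimate \eqref{est_Teps} is a one-line evaluation of \eqref{est_Teps_ab}:
\begin{equation*}
  \int_{a_\omega(x_0)}^{a_\alpha(x_0)}\frac{da}{f(a,0,0)}=\int_{-y_\omega(x_0)}^{-y_\alpha(x_0)}\frac{da}{-a}=\ln\frac{y_\omega(x_0)}{y_\alpha(x_0)}.
\end{equation*}
To transfer these planar conclusions to the full three-dimensional system \eqref{deq_predprey}, I would invoke the global attraction to $\Lambda$ recorded in \eqref{decay_predprey}: every bounded $\omega$-limit set of \eqref{deq_predprey} in $\mathbb R^3_+$ lies on $\Lambda$, and a periodic orbit on $\Lambda$ inherits its stability type in $\mathbb R^3_+$ because the transverse direction contracts at the uniform exponential rate $\epsilon$.

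The main technical nuisance, rather than a genuine obstacle, is the bookkeeping of signs and the orientation reversal introduced by the substitution $a=-y$: one must be careful that the sign conditions on $\chi$ and $\lambda$ needed to invoke Theorem~\ref{thm_general} correspond correctly to the signs in \eqref{chi_predprey}--\eqref{lambda_predprey}, since an inadvertent flip of the sign of $\lambda$ would swap the stable and unstable cases in the statement of Theorem~\ref{thm_predprey}.
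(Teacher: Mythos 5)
Your proposal is correct and follows essentially the same route as the paper: apply Theorem~\ref{thm_general} to the restriction \eqref{deq_predprey_xy} on $\Lambda$ with $(a,b)=(-y,x)$, and then lift existence, stability, and nonexistence to \eqref{deq_predprey} using that $\Lambda$ is invariant and globally exponentially attracting by \eqref{decay_predprey}; your explicit verification of \eqref{turning_f}--\eqref{turning_g}, of hypothesis $\mathrm{(\hyperref[cond_H]{H})}$, and of the period integral $\int f(a,0,0)^{-1}\,da=\ln\bigl(y_\omega(x_0)/y_\alpha(x_0)\bigr)$ merely fills in steps the paper leaves implicit. The sign bookkeeping you flag is harmless: the substitution reproduces $\lambda(x_0)=\int_{\gamma(x_0)}F'(x)\,\bigl(F(x)-y\bigr)^{-1}dx$ with a positive prefactor (and only the vanishing of $\chi$, not its sign, is used here), so the stable and unstable cases are not interchanged.
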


\begin{proof}
If $\chi(x_0)= 0$ and $\lambda(x_0)\ne 0$,
then by Theorem~\ref{thm_general},
for every small $\epsilon>0$,
system \eqref{deq_predprey_xy} has unique periodic orbit $\ell_\epsilon$
near $\gamma(x_0)\subset \Lambda$.
Since $\Lambda$ is invariant under \eqref{deq_predprey_xy},
$\ell_\epsilon$ is also a periodic orbit  for \eqref{deq_predprey}.
If $\lambda(x_0)>0$, then $\ell_\epsilon$
is orbitally unstable for \eqref{deq_predprey_xy},
and therefore $\ell_\epsilon$ is orbitally unstable for \eqref{deq_predprey}.
If $\lambda(x_0)<0$, then $\ell_\epsilon$
is locally orbitally asymptotically stable for \eqref{deq_predprey_xy}.
Since $\Lambda$ is a hyperbolic attractor,
it follows that $\ell_\epsilon$ is locally orbitally asymptotically stable for \eqref{deq_predprey}.

On the other hand,
If $\chi(x_0)\ne 0$ and $\lambda(x_0)\ne 0$,
then by Theorem~\ref{thm_general},
for every small $\epsilon>0$,
there is no periodic orbit of \eqref{deq_predprey_xy} near $\gamma(x_0)\subset \Lambda$.
Since $\Lambda$ is a global attractor,
it follows that
there is no periodic orbit of \eqref{deq_predprey} near $\gamma(x_0)$.
\end{proof}

The function $\chi(x_0)$ defined by  \eqref{chi_predprey}
can be expressed as
a line integral along the trajectory $\gamma(x_0)$ as follows.

\begin{proposition}
\label{prop_chi_x}
The the function $\chi$ defined by \eqref{chi_predprey} satisfies
\begin{equation}\label{predprey_chi_x}
  \chi(x_0)
  =\int_{\gamma(x_0)}
  \frac{p(x)}{\rho mx+p(x)}\;
  \frac{F(x)-F(0)}{F(x)-y}\;dx.
\end{equation}
\end{proposition}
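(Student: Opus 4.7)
The plan is to recast \eqref{chi_predprey} as a line integral along $\gamma(x_0)$ and then eliminate an exact $1$-form that integrates to zero because $\gamma(x_0)$ has both endpoints on the $y$-axis.

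First I would parameterize $\gamma(x_0)$ in forward time as $(x(t),y(t))$, $t\in\mathbb R$, with $(x(t),y(t))\to(0,y_\alpha(x_0))$ as $t\to-\infty$ and $(x(t),y(t))\to(0,y_\omega(x_0))$ as $t\to+\infty$. Since $\dot y=y\,p(x)>0$ along the interior of $\gamma(x_0)$, $y(t)$ increases strictly from $y_\alpha$ to $y_\omega$, so $dy=y\,p(x)\,dt$ turns \eqref{chi_predprey} into $\chi(x_0)=\int_{-\infty}^{\infty}(y-\bar y)\,p(x)\,dt$. Multiplying and dividing by $\dot x$ and substituting the equation $\dot x=c(\rho m x+p(x))(F(x)-y)$ from \eqref{fast_predprey} then yields
\begin{equation*}
  \chi(x_0)
  = \int_{\gamma(x_0)} \frac{(y-\bar y)\,p(x)}{c(\rho m x+p(x))(F(x)-y)}\,dx.
\end{equation*}

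Next I would exploit $\bar y=F(0)$ and the algebraic identity
\begin{equation*}
  \frac{y-\bar y}{F(x)-y}
  = -1 + \frac{F(x)-F(0)}{F(x)-y}
\end{equation*}
to split this into two pieces. The term coming from $\frac{F(x)-F(0)}{F(x)-y}$ is precisely the right-hand side of \eqref{predprey_chi_x}, up to the positive constant $c$. The remaining term is
\begin{equation*}
  -\int_{\gamma(x_0)} \frac{p(x)}{c(\rho m x+p(x))}\,dx,
\end{equation*}
which is the line integral of the exact $1$-form $d\Phi$ with $\Phi(x)=\int_0^x\frac{p(\xi)}{c(\rho m\xi+p(\xi))}\,d\xi$; since $\gamma(x_0)$ starts and ends at $x=0$, this integral vanishes. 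Absorbing the factor $c$ (legitimate because $\chi$ was defined only up to a positive multiplicative constant, as stated just before \eqref{chi_predprey}) produces \eqref{predprey_chi_x}.

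The only subtlety is the passage from the time integral to the $dx$ line integral at the turning point of $\gamma(x_0)$, where $\dot x$ and $F(x)-y$ vanish simultaneously. Because these zeros cancel, the full integrand $(y-\bar y)\,p(x)$ remains continuous in $t$, so no genuine singularity is introduced and the manipulation is rigorous.
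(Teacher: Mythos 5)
Your proof is correct and takes essentially the same route as the paper: both convert the $dy$ expression for $\chi$ into a line integral along $\gamma(x_0)$ via the equations of \eqref{fast_predprey}, and then discard an exact one-form depending on $x$ alone, which integrates to zero because both endpoints of $\gamma(x_0)$ lie on $\{x=0\}$. The only difference is organizational: you split the integrand with the single identity $\frac{y-\bar y}{F(x)-y}=-1+\frac{F(x)-F(0)}{F(x)-y}$, while the paper first invokes $\int_{-\infty}^{\infty}(\rho m x+p(x))(F(x)-y)\,dt=0$ and then eliminates $\int_{-\infty}^{\infty}\rho m x\,(F(x)-y)\,dt$ through the antiderivative $\eta$; your treatment of the constant $c$ and of the turning point matches the paper's conventions.
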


\begin{proof}
Fix any $x_0\in (0,S^0/\rho)$.
Let $(x(t),y(t))$
be the solution of \eqref{fast_predprey} 
with trajectory $\gamma(x_0)$.
Recall that $(0,y_\alpha(x_0))$
and $(0,y_\omega(x_0))$
are the alpha- and omega-limit point, respectively,
of the trajectory $\gamma(x_0)$.
From the equation for $\dot{y}$ in \eqref{fast_predprey},
equation \eqref{chi_predprey} can be written as
\begin{equation}\label{predprey_chi_t}
  \chi(x_0)= \int_{-\infty}^{\infty}(y-\bar{y})\,p(x)\;dt,
\end{equation}
where $\bar{y}=F(0)$.
On the other hand,
from the equation for $\dot{x}$ in \eqref{fast_predprey},
\begin{equation}\label{predprey_F_t}
  \int_{-\infty}^{\infty} (\rho mx+p(x))\,(F(x)-y)\;dt=0.
\end{equation}
From \eqref{predprey_chi_t} and \eqref{predprey_F_t},
it follows that \begin{equation}\label{predprey_F_F}
  \chi(x_0)= \int_{-\infty}^{\infty}p(x)(F(x)-\bar{y})\;dt
  +\int_{-\infty}^{\infty}\rho m x(F(x)-y)\;dt.
\end{equation}
Note that \begin{align*}
  \int_{-\infty}^{\infty}\rho m x(F(x)-y)\;dt
  &=\int_{-\infty}^{\infty} \frac{\rho m x}{\rho m x+ p(x)} x'(t)\;dt
  \\
  &=\eta(x(t))\big|_{t=-\infty}^{\infty}
\end{align*}
where $\eta(u)=\int_0^u \frac{\rho m x}{\rho m x+ p(x)}\;dx$,
which is continuous at $u=0$
because $p(0)=0$ and $p'(0)> 0$.
Since $x(\infty)=x(-\infty)=0$,
it follows that \begin{equation}\label{predprey_F_0}
  \int_{-\infty}^{\infty}\rho m x(F(x)-y)\;dt=0.
\end{equation}
Equations \eqref{predprey_F_F} and \eqref{predprey_F_0} give
\begin{equation}\notag
  \chi(x_0)
  = \int_{-\infty}^{\infty}p(x)(F(x)-\bar{y})\;dt.
\end{equation}
Therefore,
from the equation for $\dot{x}$ in \eqref{fast_predprey},
equation \eqref{predprey_chi_x} follows.
\end{proof}

Next we assume that the function $F(x)$ in \eqref{deq_predprey}
satisfies the one-hump condition:
For some $\widehat{x}\in (0,S^0/\rho)$,
\begin{equation}\label{cond_1hump}
  F'(x)>0\quad \forall\; x\in (0,\widehat{x}),
  \quad\text{and}\quad
  F'(x)<0\quad \forall\; x\in (\widehat{x},S^0/\rho).
\end{equation}

The following result is similar to \cite[Theorem 3.1]{Hsu:2019}.

\begin{theorem}
\label{thm_1hump}
Assume \eqref{cond_1hump} holds for
$F(x)$ defined by \eqref{predprey_F0}.
Then system \eqref{deq_predprey}
has a globally orbitally asymptotically stable
(with respect to all positive non-stationary solutions)
periodic orbit
for all small $\epsilon>0$.
\end{theorem}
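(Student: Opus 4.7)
The plan is to apply Theorem~\ref{thm_predprey} together with a planar Poincar\'e--Bendixson argument: locate a unique $x^* \in (0, S^0/\rho)$ with $\chi(x^*) = 0$ and $\lambda(x^*) < 0$, extract the locally stable periodic orbit near $\Gamma(x^*)$, and then upgrade local stability to global stability by using the converse part of Theorem~\ref{thm_predprey} to rule out any other limit cycle on the attracting simplex $\Lambda$.

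The first step is to locate a zero of $\chi$. Under the one-hump assumption \eqref{cond_1hump}, let $x^{**} \in (\widehat{x}, S^0/\rho)$ be the unique point with $F(x^{**}) = F(0) = \bar y$. For $x_0 \in (0, x^{**}]$ the orbit $\gamma(x_0)$ is contained in $\{0 \le x \le x^{**}\}$, where $F(x) \ge F(0)$; splitting $\gamma(x_0)$ into its lower arc (where $y < F(x)$) and its upper arc (where $y > F(x)$) shows that the integrand in the line-integral representation from Proposition~\ref{prop_chi_x} is non-negative, so $\chi(x_0) > 0$. On the other hand, the closed form $\chi(x_0) = (y_\omega(x_0) - y_\alpha(x_0)) - \bar y\,\ln(y_\omega(x_0)/y_\alpha(x_0))$, obtained by evaluating \eqref{chi_predprey} directly, together with $y_\alpha(x_0) \to 0^+$ as $x_0 \to (S^0/\rho)^-$ (since the trajectory then shadows the saddle $E_1$ on the $x$-axis), yields $\chi(x_0) \to -\infty$. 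Continuity then produces some $x^* \in (x^{**}, S^0/\rho)$ with $\chi(x^*) = 0$.

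The second step is to show $\lambda(x^*) < 0$ at every such zero. From \eqref{lambda_predprey}, $\lambda(x^*) = \int_{\gamma(x^*)} F'(x)/(F(x) - y)\,dy$ splits into a positive contribution over $\{x < \widehat{x}\}$ and a negative contribution over $\{x > \widehat{x}\}$ under \eqref{cond_1hump}. Using the limiting-system relation $dy/dx = y\,p(x)/[c(\rho m x + p(x))(F(x) - y)]$ to convert each piece into a comparable form and exploiting the geometry that $\gamma(x^*)$ loops around the apex $(\widehat{x}, F(\widehat{x}))$, a comparison argument similar to \cite[Theorem~3.1]{Hsu:2019} shows that the decreasing-branch contribution dominates, giving $\lambda(x^*) < 0$. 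Combined with the observation noted earlier in the paper that $\mathrm{sign}\,\lambda(s_0) = \mathrm{sign}\,\chi'(s_0)$ whenever $\chi(s_0) = 0$, this forces every zero of $\chi$ to be simple and decreasing, so $x^*$ is unique and $\chi > 0$ on $(0, x^*)$, $\chi < 0$ on $(x^*, S^0/\rho)$.

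Finally, Theorem~\ref{thm_predprey} supplies a locally orbitally asymptotically stable periodic orbit $\ell_\epsilon$ near $\Gamma(x^*)$ for all small $\epsilon > 0$, while its converse part rules out any periodic orbit intersecting a neighborhood of the interior of $\gamma(x_0)$ for $x_0 \ne x^*$; since $\{\gamma(x_0)\}_{x_0 \in (0, S^0/\rho)}$ fills the relevant region of $\Lambda$, $\ell_\epsilon$ is the only periodic orbit on $\Lambda$. Because $\Lambda$ attracts all of $\mathbb R^3_+$ by \eqref{decay_predprey} and the induced flow on $\Lambda$ is two-dimensional, Poincar\'e--Bendixson, together with the fact that the unique interior equilibrium lies on the increasing branch of $F$ for small $\epsilon$ and is therefore hyperbolically unstable, forces every positive non-stationary trajectory to converge to $\ell_\epsilon$. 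I expect the sign analysis for $\lambda(x^*)$ to be the principal obstacle, since $F'$ changes sign along $\gamma(x^*)$ and the dominance of the decreasing branch must be made rigorous; the behaviour of $\chi$ at the ends of $(0, S^0/\rho)$ is comparatively elementary.
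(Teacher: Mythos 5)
Your overall architecture (positivity of $\chi$ on $(0,\bar{x}]$, $\chi\to-\infty$ at $S^0/\rho$, a zero with $\lambda<0$, then Poincar\'e--Bendixson on $\Lambda$) matches the paper's, but the two load-bearing steps are not actually carried out. The central one is the sign of $\lambda$: you reduce everything to ``a comparison argument similar to \cite[Theorem~3.1]{Hsu:2019} shows that the decreasing-branch contribution dominates,'' and you yourself flag this as the principal obstacle --- that is precisely the content of Lemma~\ref{lemma_1hump}, not a detail. The paper proves the stronger claim $\lambda(x_0)<0$ for \emph{every} $x_0\in[\bar{x},S^0/\rho)$ (your $x^{**}=\bar{x}$) via the $dx$-form \eqref{lambda_int_F}: one discards the integral over $(\bar{x},x_0)$, where $F'\le 0$ makes the integrand nonpositive, and on $(0,\bar{x})$ one uses that $Y_-$ is increasing and $Y_+$ is decreasing to replace $Y_\pm(x)$ by $Y_\pm(\widehat{x})$, turning the integrand into the exact derivative of $\ln\bigl[(F(x)-Y_-(\widehat{x}))/(Y_+(\widehat{x})-F(x))\bigr]$, whose integral over $(0,\bar{x})$ vanishes because $F(\bar{x})=F(0)$. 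Two further points: your preliminary sign split (``positive over $\{x<\widehat{x}\}$, negative over $\{x>\widehat{x}\}$'') is only valid for the $dx$-form of $\lambda$, i.e.\ \eqref{lambda_int_F}; in the $dy$-form you quote from \eqref{lambda_predprey} the sign also depends on which arc of $\gamma(x^*)$ you are on (the $dy$ in \eqref{lambda_predprey} is a typo for $dx$). And your uniqueness of the root of $\chi$ leans on the remark that $\lambda(s_0)$ and $\chi'(s_0)$ share a sign at zeros of $\chi$, which is stated but not proved in the paper; the paper instead gets uniqueness self-containedly, by deriving a contradiction with Poincar\'e--Bendixson from two nested orbitally stable cycles with no equilibrium or unstable cycle between them.

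The global step also has a gap. The converse part of Theorem~\ref{thm_predprey} is local: for each interior point $z_1$ of a $\gamma(x_0)$ with $\chi(x_0)\ne 0$ it gives a neighborhood and an $\epsilon$-threshold depending on $z_1$. ``The family $\gamma(x_0)$ fills the region'' therefore does not rule out periodic orbits that collapse toward the line $\{x=0\}$ or toward the interior equilibrium, which sits at $x=O(\epsilon)$ --- for instance small cycles of Hopf type around it; instability of that equilibrium does not exclude cycles surrounding it. The paper closes exactly this hole with Lemma~\ref{lem_xfar} (no periodic orbit entirely inside the strip $\{0<x<\widehat{x}\}$, since $F'>0$ there), which your proposal omits and does not replace. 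Finally, to apply Poincar\'e--Bendixson on $\Lambda$ and conclude that every positive non-stationary solution converges to $\ell_\epsilon$, one must also prevent omega-limit sets from meeting the boundary of $\Lambda$ (which contains $E_1$ and the washout equilibrium); the paper does this with a Butler--McGehee persistence argument, which is likewise missing from your sketch.
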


We will use the following two lemmas.

\begin{lemma}
\label{lemma_1hump}
Assume \eqref{cond_1hump}.
Then the function $\chi$ defined by \eqref{chi_predprey}
has a unique root $x_0$ in $(0,S^0/\rho)$,
and it satisfies $\lambda(x_0)<0$.
\end{lemma}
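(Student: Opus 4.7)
My plan is to establish that $\chi$ has exactly one zero in $(0,S^{0}/\rho)$ and then deduce $\lambda<0$ at that zero from the remark after Theorem~\ref{thm_general}, which identifies the sign of $\lambda$ with that of $\chi'$ at zeros of $\chi$. I will use three equivalent representations of $\chi$: the line-integral form from Proposition~\ref{prop_chi_x},
\[
\chi(x_{0})=\int_{\gamma(x_{0})}\Phi(x)\,\frac{F(x)-\bar{y}}{F(x)-y}\,dx,\qquad \Phi(x):=\frac{p(x)}{\rho m x+p(x)}>0;
\]
the time-parameterized form $\chi(x_{0})=\int p(x)(F(x)-\bar{y})\,dt$ derived in its proof; and the endpoint formula $\chi(x_{0})=G(y_{\omega}(x_{0}))-G(y_{\alpha}(x_{0}))$ with $G(y)=y-\bar{y}\ln y$, which is strictly convex with its minimum at $\bar{y}$.

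Under the one-hump condition, $F(x)>\bar{y}$ on $(0,x^{*})$ and $F(x)<\bar{y}$ on $(x^{*},S^{0}/\rho)$, where $x^{*}\in(\hat{x},S^{0}/\rho)$ is the unique second zero of $F-\bar{y}$. Splitting $\gamma(x_{0})$ into its right-going branch (where $y<F(x)$ and $dx>0$) and its left-going branch (where $y>F(x)$ and $dx<0$), one checks that $dx/(F-y)>0$ on each branch, so the integrand in the line-integral form carries the sign of $F(x)-\bar{y}$; hence $\chi(x_{0})>0$ for all $x_{0}\in(0,x^{*}]$. As $x_{0}\to(S^{0}/\rho)^{-}$, $\gamma(x_{0})$ approaches the unstable separatrix of the hyperbolic saddle $E_{1}=(S^{0}/\rho,0)$; standard saddle-passage estimates give a logarithmically diverging dwell time near $E_{1}$, where $F(x)\to 0<\bar{y}$, so the time-parameterized form forces $\chi(x_{0})\to-\infty$. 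The intermediate value theorem then yields a zero in $(x^{*},S^{0}/\rho)$.

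For uniqueness I aim to show $\chi'(x_{0})<0$ at every zero of $\chi$. Differentiating the endpoint formula gives
\[
\chi'(x_{0})=\frac{y_{\omega}-\bar{y}}{y_{\omega}}\,y_{\omega}'(x_{0})-\frac{y_{\alpha}-\bar{y}}{y_{\alpha}}\,y_{\alpha}'(x_{0}),
\]
and the sensitivities $y_{\alpha}'(x_{0})$, $y_{\omega}'(x_{0})$ are obtained from the variational equation along $\gamma(x_{0})$. At a zero the constraint $G(y_{\omega})=G(y_{\alpha})$ couples the two terms, and the combined expression should reduce to a weighted line integral in which the only sign-bearing factor is $F'(x)$. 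Since any zero must satisfy $x_{0}>x^{*}>\hat{x}$, the dominant contribution comes from $x\in(\hat{x},x_{0})$, where $F'(x)<0$ by the one-hump condition; this produces $\chi'(x_{0})<0$. I expect this monotonicity step to be the main obstacle; it is the analog of the uniqueness argument in the proof of Hsu~\cite[Theorem~3.1]{Hsu:2019}.

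Once $\chi'(x_{0}^{*})<0$ is established at the (now unique) zero $x_{0}^{*}$, the remark after Theorem~\ref{thm_general} gives $\lambda(x_{0}^{*})<0$, completing the proof of the lemma.
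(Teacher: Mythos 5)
Your first two steps are essentially sound and close to the paper's: the positivity $\chi(x_0)>0$ for $x_0\in(0,x^*]$ via the line-integral form of Proposition~\ref{prop_chi_x} is exactly the paper's argument (your $x^*$ is the paper's $\bar{x}$), and the divergence $\chi(x_0)\to-\infty$ is established in the paper more cleanly from the endpoint formula $\chi=\psi(y_\omega)-\psi(y_\alpha)$ together with $y_\alpha(x_0)\to 0$, rather than via saddle passage-time estimates, but your route is workable. The existence of a root then follows as you say.

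The genuine gap is the uniqueness step together with the sign of $\lambda$. You propose to prove $\chi'(x_0)<0$ at every zero by differentiating the endpoint formula, computing $y_\alpha'$, $y_\omega'$ from variational equations along the heteroclinic orbit, and asserting that the result ``should reduce to a weighted line integral in which the only sign-bearing factor is $F'(x)$'' with the ``dominant contribution'' coming from $(\hat{x},x_0)$. None of this is carried out, and the crucial balancing is missing: the portion $(0,\hat{x})$, where $F'>0$, contributes with the opposite sign, and one needs a quantitative argument to show the negative part wins. (You acknowledge this is the main obstacle; as written it is an unproved claim, not a proof.) Moreover, your final inference $\lambda(x_0)<0$ leans on the remark that $\lambda(s_0)$ and $\chi'(s_0)$ share a sign when $\chi(s_0)=0$, which the paper states without proof, so your chain has two unverified links. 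The paper closes both at once by a different mechanism: it proves directly that $\lambda(x_0)<0$ for \emph{all} $x_0\in[\bar{x},S^0/\rho)$, using the representation \eqref{lambda_int_F}, the monotonicity of the branches $Y_\pm$, and the trick of freezing $Y_\pm$ at their values at $\hat{x}$ so that the resulting bound evaluates exactly to $\ln\bigl((F(x)-Y_-(\hat{x}))/(Y_+(\hat{x})-F(x))\bigr)\big|_0^{\bar{x}}=0$ thanks to $F(\bar{x})=F(0)$; uniqueness of the root is then obtained not from monotonicity of $\chi$ but by contradiction: two roots would, by Theorem~\ref{thm_general}, produce two nested locally asymptotically stable limit cycles with no unstable cycle or equilibrium between them, contradicting the Poincar\'e--Bendixson theorem. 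To complete your proposal you would either have to actually establish $\chi'<0$ at zeros (and justify the sign link to $\lambda$, or prove $\lambda<0$ separately), or switch to an argument of the paper's type.
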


\begin{proof}
Note that condition \eqref{cond_1hump}
implies that there exists a unique value $\bar{x}\in (\widehat{s},S^0/\rho)$
such that $F(\bar{x})=F(0)$.

First we claim that \begin{equation}\label{predprey_chi_pos}
  \chi(x_0)>0
  \;\;\text{for all}\;\;
  x_0\in (0,\bar{x}].
\end{equation}
Fix any $x_0\in (0,\widehat{x})$.
We parameterize $\gamma(x_0)$ by \begin{equation}\label{def_Ypm}
  \gamma(x_0)
  =\{(x,Y_-(x)): x\in (0,x_0]\}
  \cup \{(x,Y_+(x)): x\in (0,x_0]\}
\end{equation} 
with \begin{equation}\label{ineq_Ypm_F}
  Y_-(x)<F(x)
  \quad\text{and}\quad
  Y_+(x)>F(x)
  \quad\forall\; x\in (0,x_0).
\end{equation}
Then equation \eqref{predprey_chi_x} in Proposition \ref{prop_chi_x}
yields
\begin{equation}\notag
  \chi(x_0)
  = \int_0^{x_0}\frac{p(x)}{\rho mx+p(x)}
  \left[
    \frac{F(x)-F(0)}{F(x)-Y_-(x)}
    - \frac{F(x)-F(0)}{F(x)-Y_+(x)}
  \right]\;dx.
\end{equation}
Since $F(x)-F(0)>0$ for $0<x<x_0<\bar{x}$,
by \eqref{ineq_Ypm_F}
it follows that $\chi(x_0)>0$.

Next we claim that \begin{equation}\label{predprey_lambda_neg}
  \lambda(x_0)<0
  \;\;\text{for all}\;\;
  x_0\in [\bar{x},S^0/\rho).
\end{equation}
Fix any $x_0\in [\bar{x},S^0/\rho)$.
From the definition of $\lambda(x_0)$ in \eqref{lambda_predprey},
using \eqref{def_Ypm} we have \begin{equation}\label{lambda_int_F}
  \lambda(x_0)
  =\int_{0}^{x_0}F'(x)\left(
    \frac{1}{F(x)-Y_-(x)}+\frac{1}{Y_+(x)-F(x)}
  \right) dx.
\end{equation}
Since $F'(x)$ for $x\in (\bar{x},x_0]$,
by \eqref{ineq_Ypm_F}
we obtain \begin{equation}\notag
  \lambda(x_0)
  <\int_0^{\bar{x}}
  F'(x)\left(
    \frac{1}{F(x)-Y_-(x)}+\frac{1}{Y_+(x)-F(x)}
  \right) dx.
\end{equation}
Since $Y_+(x)$ is decreasing and $Y_-(x)$ is increasing,
condition \eqref{cond_1hump} yields \begin{align*}
  \lambda(x_0)
  &<\int_0^{\bar{x}}
  F'(x)\left(
    \frac{1}{F(x)-Y_-(\widehat{x})}+\frac{1}{Y_+(\widehat{x})-F(x)}
  \right) dx
  \\[.5em]
  &=\left.\ln\left(
    \frac{F(x)-Y_-((\widehat{x})}{Y_+(\widehat{x})-F(x)}
  \right)\right|_{x=0}^{\bar{x}}
  = 0.
\end{align*}
The last equality follows from the condition $F(\bar{x})=F(0)$.
Hence $\lambda(x_0)<0$.

Finally, we claim that \begin{equation}\label{predprey_chi_limit}
  \lim_{x_0\to S^0/\rho}\chi(x_0)=-\infty.
\end{equation}
Note that the expression \eqref{chi_predprey} of $\chi$
can be written  as \begin{equation}\label{predprey_chi_phi}
  \chi(x)= \psi(y_\omega(x))- \psi(y_\alpha(x)),
\end{equation}
where \begin{equation}\notag
  \psi(y)=y-\bar{y}-\ln(y/\bar{y}).
\end{equation}
Note also that the functions $y_\alpha(x)$ and $y_\omega(x)$
satisfy \begin{equation}\label{predprey_y_limits}
  \lim_{x\to K^-}y_\alpha(x)= 0
  \quad\text{and}\quad
  \lim_{x\to K^-}y_\omega(x)
  \;\;\text{exists and is finite}.
\end{equation} 
Since $\lim_{y\to 0^+}\psi(y)=\infty$, 
\eqref{predprey_chi_limit} follows from
\eqref{predprey_chi_phi} and\eqref{predprey_y_limits}.

Since $\chi(x)>0$ for $x\in (0,\bar{x})$ and
$\lim_{x\to S^0/\rho}\chi(x)=-\infty$,
the continuous function $\chi(x)$
has at least one root in $(\bar{x},S^0/\rho)$.
Suppose for contradiction that
$\chi$ has two distinct roots, say $x_0<x_1$.
By \eqref{predprey_chi_pos} and \eqref{predprey_lambda_neg},
we have $\lambda(x_0)<0$ and $\lambda(x_1)<0$.
By Theorem \ref{thm_general}
there are 
locally orbitally asymptotically stable
periodic orbits 
$\ell^{(0)}_\epsilon$ and $\ell^{(1)}_\epsilon$
near $\Gamma(x_0)$ and $\Gamma(x_1)$, respectively.
Note that $\Gamma(x_0)$ is enclosed by $\Gamma(x_1)$.
By \eqref{predprey_lambda_neg} and Theorem \ref{thm_general}
there is no unstable periodic orbit 
between $\ell^{(0)}_\epsilon$ and $\ell^{(1)}_\epsilon$.
Also note that
no equilibrium lies between $\ell^{(0)}_\epsilon$ and $\ell^{(1)}_\epsilon$.
This contradicts the Poincar\'{e}-Bendixson Theorem.
Therefore $\chi$ has exactly one root $x_0$ in $(0,S^0/\rho)$.
By \eqref{predprey_chi_pos} and \eqref{predprey_lambda_neg},
this root satisfies $\bar{x}<x_0<S^0/\rho$,
and therefore $\lambda(x_0)<0$.
\end{proof}

The next lemma was derived by Wolkowicz \cite{Wolkowicz:1988},
and we omit its proof here.

\begin{lemma}
\label{lem_xfar}
If $F'(x)>0$ on $(0,\widehat{x}]$ 
or $F'(x)<0$ on $(0,\widehat{x}]$ 
for some $\widehat{x}>0$,
then
no periodic orbit of \eqref{deq_predprey_xy}
lies entirely in the strip $\{(x,y): 0<x<\widehat{x}\}$
for any sufficiently small $\epsilon>0$.
\end{lemma}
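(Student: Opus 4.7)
The plan is to invoke the Bendixson--Dulac criterion on the open, simply connected region $R = \{(x,y) : 0 < x < \widehat{x},\ y > 0\}$, using the classical chemostat Dulac function
\[
B(x,y) = \frac{1}{c\,y\,(\rho m x + p(x))},
\]
which is smooth and strictly positive on $R$ because $\rho m x + p(x) > 0$ for $x > 0$. A direct computation gives
\[
B\dot x \;=\; \frac{F_\epsilon(x)}{y} - 1, \qquad B\dot y \;=\; \frac{p(x) - \epsilon}{c(\rho m x + p(x))},
\]
so the divergence of the rescaled vector field reduces to
\[
\partial_x(B\dot x) + \partial_y(B\dot y) \;=\; \frac{F_\epsilon'(x)}{y}.
\]

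The next step is to show that $F_\epsilon'(x)$ carries the same sign as $F'(x)$ on (most of) $(0,\widehat{x})$ for all sufficiently small $\epsilon > 0$. From \eqref{def_Feps} I would write $F_\epsilon = F - \epsilon\, H$ with $H(x) = x/[c(\rho m x + p(x))]$; the conditions $p(0)=0$ and $p'(0)>0$ imply that $H$ extends smoothly across $x = 0$, and in particular $\|H'\|_\infty$ is finite on $[0,\widehat x]$. Hence $F_\epsilon' = F' - \epsilon H'$, and on any compact subinterval $[\delta,\widehat x] \subset (0,\widehat x]$ the hypothesis yields $|F'| \ge c_\delta > 0$, so for $\epsilon < c_\delta/\|H'\|_\infty$ the sign of $F_\epsilon'$ coincides with that of $F'$ on $[\delta,\widehat x]$. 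The Bendixson--Dulac theorem then excludes closed orbits of \eqref{deq_predprey_xy} lying entirely in the subregion $R_\delta = \{\delta < x < \widehat x,\ y > 0\}$.

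The principal obstacle is that $F'$ may tend to $0$ as $x \to 0^+$, allowing $F_\epsilon'$ to flip sign in an $O(\epsilon)$-thin layer abutting the $y$-axis, so that the Dulac conclusion on $R_\delta$ does not immediately extend to all of $R$. I would close this gap by a contradiction argument: suppose there were periodic orbits $\ell_{\epsilon_n} \subset R$ for some sequence $\epsilon_n \to 0$ failing to lie in any fixed $R_\delta$. Then $\ell_{\epsilon_n}$ must cluster near the invariant axis $\{x=0\}$, on which $\dot y = -\epsilon_n y < 0$ forces $y$ to decay. An entry--exit analysis paralleling Theorem~\ref{thm_entryexit2} then shows that any trajectory entering a small neighborhood of $\{x=0\}$ in forward time exits with its $y$-coordinate strictly smaller than at entry, contradicting closure of $\ell_{\epsilon_n}$. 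Combining the Dulac conclusion on $R_\delta$ (for each fixed $\delta > 0$) with this entry--exit obstruction on $\{0 < x \le \delta\}$ yields the lemma.
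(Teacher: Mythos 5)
The first half of your argument is sound and is essentially the classical route: the paper itself omits the proof and cites Wolkowicz (1988), where the prey isocline does not depend on the perturbed parameter, so the Dulac computation you perform works on the whole strip at once. Your identity $\partial_x(B\dot x)+\partial_y(B\dot y)=F_\epsilon'(x)/y$ is correct, the decomposition $F_\epsilon=F-\epsilon H$ from \eqref{def_Feps} is correct, and on each $R_\delta$ your comparison $F_\epsilon'=F'-\epsilon H'$ gives sign preservation for small $\epsilon$ (a minor caveat: under \eqref{cond_p} alone $p$ is only $C^1$, so $H'$ need not be bounded near $x=0$; with the smoothness assumed elsewhere in the paper this is harmless). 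Note also that whenever $F'$ extends continuously to $x=0$ with $F'(0)>0$ (automatic, for instance, under the concavity in Example~\ref{ex_H2}), your own estimate already gives $F_\epsilon'>0$ on all of $(0,\widehat x)$, and the Dulac step alone finishes the lemma with no sliver left over.

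The genuine gap is your treatment of the sliver $\{0<x\le\delta\}$, which is exactly where the difficulty lives. First, the asserted entry--exit property (``any trajectory entering a small neighborhood of $\{x=0\}$ exits with strictly smaller $y$'') is false below the prey isocline: there $\dot x>0$, and once $x>x^*_\epsilon$, the root of $p(x)=\epsilon$, also $\dot y>0$, so such passages leave with larger $y$; and it is vacuous for orbits that never leave the sliver---in particular small cycles encircling the interior equilibrium $(x^*_\epsilon,F_\epsilon(x^*_\epsilon))$, which lies inside the sliver for small $\epsilon$ since $x^*_\epsilon\to0$. Those are precisely the orbits the lemma must exclude for its use in Theorem~\ref{thm_1hump}, and the hypothesis $F'>0$ on $(0,\widehat x]$ does not control the sign of $F_\epsilon'$ (hence the divergence, or the equilibrium's stability) in an $O(\epsilon)$-neighborhood of $x=0$ when $F'(0^+)=0$, which is the only case your patch is meant to cover. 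Second, even for passages where the entry--exit statement does hold (sliding down the attracting part of the axis, $y>F_\epsilon(0)$), exiting lower than one entered does not contradict closure: a periodic orbit can regain all of its height in the region $\{x>x^*_\epsilon\}$ where $\dot y>0$---the paper's relaxation-oscillation cycles do exactly this---so a contradiction would require a global balance over the whole loop (this is the content of the function $\chi$, not a one-line monotonicity observation). As written, then, the second half is not a proof; you would need either to force the sign of $F_\epsilon'$ up to $x=0$ (strengthening the hypothesis or exploiting extra structure of $F$ in \eqref{predprey_F0}), or to give a genuine argument (e.g., the divergence integral over a putative cycle in \eqref{deq_predprey_xy} together with an analysis of the equilibrium inside it) covering orbits that meet, or are contained in, the sliver.
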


\begin{proof}[Proof of Theorem~\ref{thm_1hump}]
By Lemma~\ref{lemma_1hump},
the function $\chi$
has a unique root $x_0$ in the interval $(0,S^0/\rho)$,
and $\lambda(x_0)<0$.
From Theorem~\ref{thm_general} and Lemma~\ref{lem_xfar},
it follows that
system \eqref{deq_predprey_xy}
has a unique periodic orbit $\ell_\epsilon$ in $\Lambda$
for all small $\epsilon>0$.
It can be shown by the Butler-McGehee Lemma \cite{Butler:1986}
that the flow \eqref{deq_predprey_xy} is persistent
in the sense that
the omega-limit set of any point in $\Lambda$
does not intersect the boundary of $\Lambda$.
Therefore, by the Poincar\'{e}-Bendixon Theorem,
the periodic orbit $\ell_\epsilon$
attracts all non-stationary points in $\Lambda$.

Since $\ell_\epsilon$
is locally orbitally asymptotically stable in $\Lambda$ for \eqref{deq_predprey_xy},
by \eqref{decay_predprey}
it follows that $\ell_\epsilon$
is locally orbitally asymptotically stable in $\mathbb R^3$ for \eqref{deq_predprey}.
Given any solution $({S}(t),{x}(t),{y}(t))$ of \eqref{deq_predprey}
with a positive initial value,
it can be shown by the Butler-McGehee Lemma that
the flow \eqref{deq_predprey} is persistent
in the sense that
the omega-limit set of any point in $\mathbb R^3_+$
does not intersect the boundary of $\Lambda$.
Let $\Omega$ be the omega-limit set of $({S}(t),{x}(t),{y}(t))$.
Then $\Omega\subset \Lambda$ since $\Lambda$ is a global attractor
and solutions of \eqref{deq_predprey} are persistent.
By the positive invariance of omega-limit sets,
$\Omega\setminus \ell_\epsilon=\emptyset$.
Hence $\Omega=\ell_\epsilon$.
This implies that the trajectory converges to $\ell_\epsilon$.
\end{proof}

\begin{example}
\label{ex_H2}
Consider the Holling type II functional response,
\begin{equation}\label{def_H2}
  p(x)=\frac{bx}{a+x}.
\end{equation}
It was shown by
Bolger et al.\ \cite{Bolger:2018}
that the function $F_\epsilon(x)$ defined by \eqref{def_Feps} is concave-down.
Hence condition \eqref{cond_1hump} is satisfied,
and the results in Theorem~\ref{thm_1hump} hold.

Numerical simulations
are shown in Figures~\ref{fig_predprey_H2_chi} and \ref{fig_predprey_H2eps}.
In the simulations,
the parameters are $(S^0,m,\gamma,c)=(10,1,1,1)$ for \eqref{deq_predprey},
and $(a,b)=(1.5,3)$ and $p(x)$ in \eqref{def_H2}.
Figure~\ref{fig_predprey_H2_chi} shows that
the function $\chi$ has a root $x_0\approx 6.92$,
and it satisfies $\lambda(x_0)<0$.
Hence $\gamma(x_0)$ corresponds
to a stable relaxation oscillation
formed by the globally asymptotically stable periodic orbit of \eqref{deq_predprey}
as $\epsilon\to 0$.
Figure~\ref{fig_predprey_H2eps}(A) shows the periodic orbit $\ell_\epsilon$
for \eqref{deq_predprey} with $\epsilon=0.5$,
and Figure~\ref{fig_predprey_H2eps}(B)
illustrates the trajectory $\gamma(x_0)$ for \eqref{fast_predprey}.
The simulation confirms that the location of $\ell_\epsilon$ is close to $\gamma(x_0)$.
\end{example}

\begin{remark}
The analysis of \eqref{deq_predprey_xy} in this section
can more generally
be applied to systems of the form
\begin{equation}\label{deq_Fpq}\begin{aligned}
  &\dot{x}= q(x)\big(F_\epsilon(x)-y\big),
  \\
  &\dot{y}= y(p(x)-\epsilon),
\end{aligned}\end{equation}
that satisfy
condition \eqref{cond_p},
$q(x)>0$ for $x>0$,
$\frac{p(x)}{q(x)}$ is continuous at $x=0$,
and
\begin{equation}\notag
  F_0(x)\begin{cases}
  >0,& \text{if}\;\; 0\le x<K,
  \\
  <0,& \text{if}\;\; K>x,
  \end{cases}
\end{equation}
for some positive constant $K$.
Similar to Theorem \ref{thm_1hump},
a unique locally orbitally asymptotic stable relaxation oscillation
for system \eqref{deq_Fpq}
in the region filled by a family of heteroclinic orbits
can be obtained
for all small $\epsilon>0$
under assumption \eqref{cond_1hump} with $F(x)=F_0(x)$
and $S^0/\rho$ replaced by $K$.
\end{remark}

\begin{figure}[htb]
{\centering
{\includegraphics[trim= 0cm 0cm 0cm 0cm, clip, width=.48\textwidth]%
{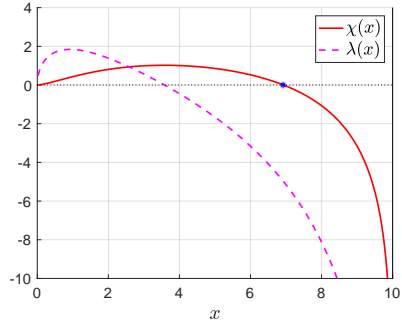}}
\par}
\caption{
Numerical simulations of $\chi$ and $\lambda$
for Example \ref{ex_H2}.
The function $\chi$ has a single root $x_1\approx 6.92$,
with $\lambda(x_1)<0$.
}
\label{fig_predprey_H2_chi}
\end{figure}

\begin{figure}[htb]
{\centering
\begin{tabular}{cc}
{\includegraphics[trim= 1.2cm 0cm 1cm 0cm, clip, width=.48\textwidth]{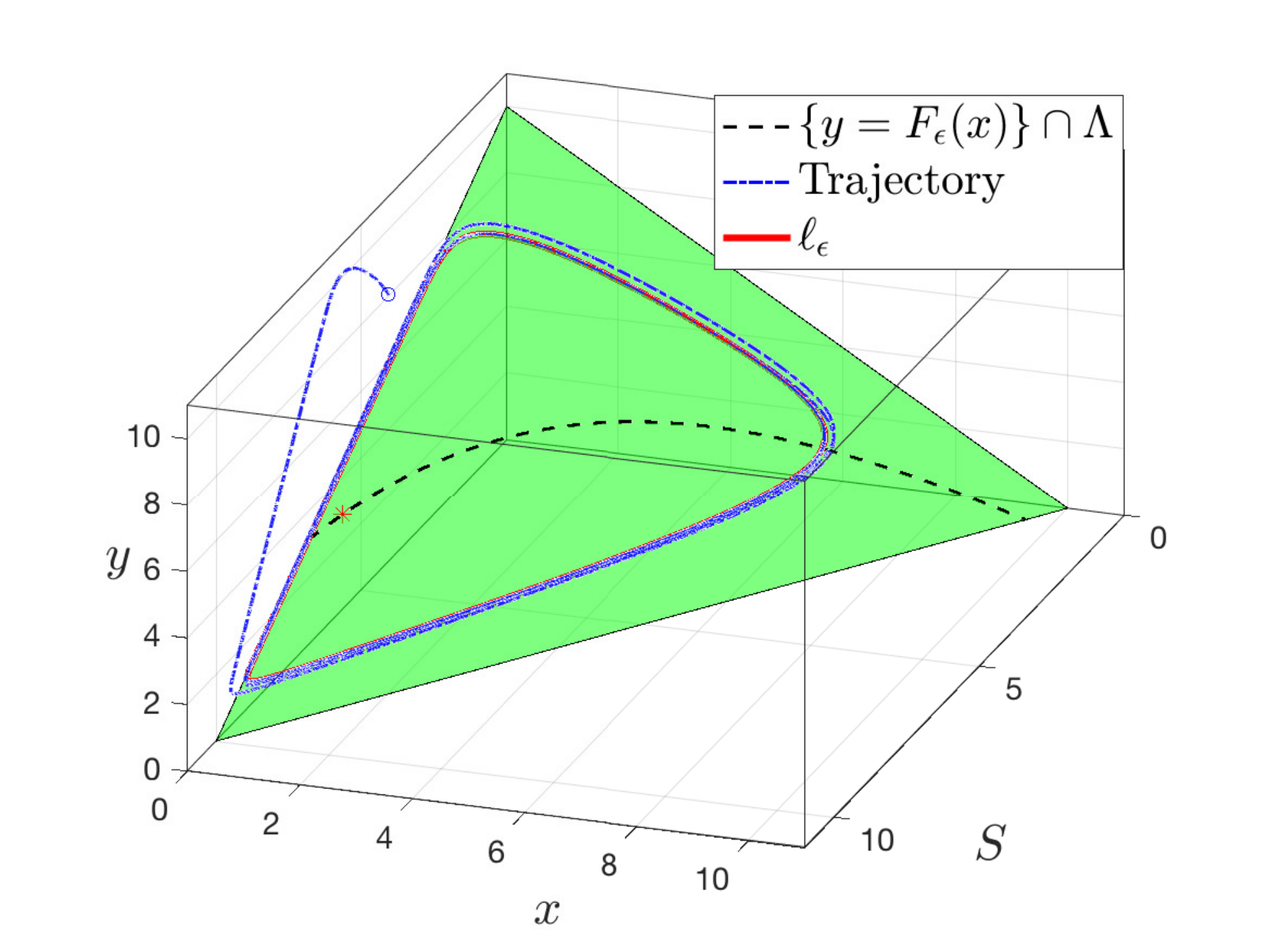}}
&
{\includegraphics[trim= 1.2cm 0cm 1cm 0cm, clip, width=.48\textwidth]{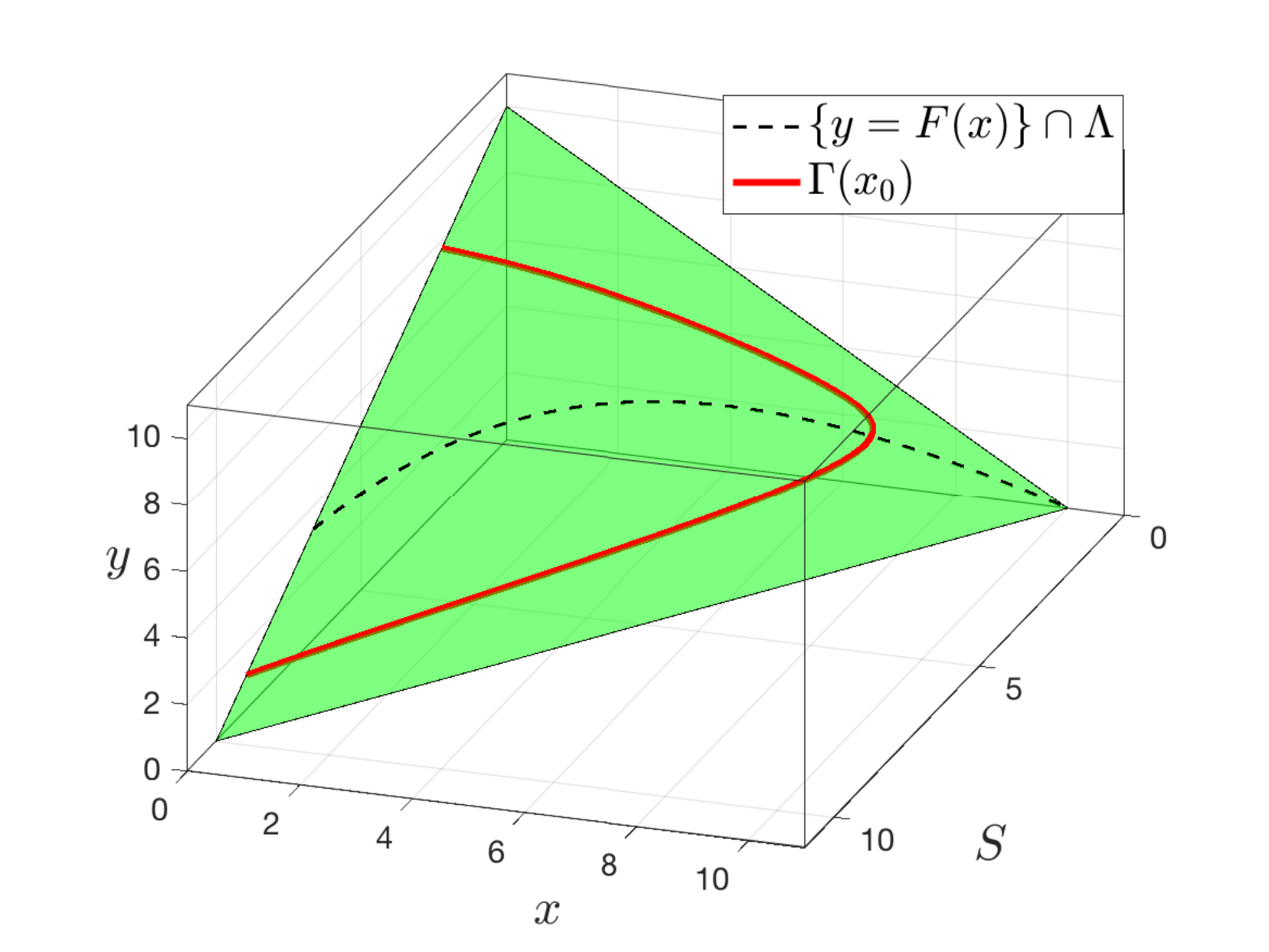}}
\\
(A)
& (B)
\end{tabular}
\par}
\caption{
(A) The trajectory of system \eqref{deq_predprey} for Example~\ref{ex_H2}
with $\epsilon=0.5$
and initial point $(S,x,y)(0)= (6,1,10)$
converges to a periodic orbit $\ell_\epsilon$.
(B) The trajectory $\gamma(x_0)$ of \eqref{fast_predprey},
where $x_0$ is  a root of $\chi$.
The simulation shows that $\ell_\epsilon$ is close to $\gamma(x_0)$
}
\label{fig_predprey_H2eps}
\end{figure}

\section{The Epidemic Model}
\label{sec_SIN}
When $\epsilon=0$,
system \eqref{deq_SIN}  reduces to system \eqref{deq_center_SIN}.
The line $\mathcal{Z}_0=\{(S,I,N): I=0, S=\frac{D}{D+p}N\}$
is a set of equilibria of \eqref{deq_SIN}
in the invariant plane $\{I=0\}$.
Let $N_0$ be the unique value that satisfies $h(\frac{D}{D+p}\bar{N},\bar{N})=a$.
It is known \cite{Graef:1996,Li:2016} that
each point on the segment $\mathcal Z_0\cap \{0<N<N_0\}$
is connected 
to a unique point on the segment $\mathcal Z_0\cap \{N_0<N<N_{\max{}}\}$
 by a heteroclinic orbit of \eqref{deq_center_SIN} (see Figure~\ref{fig_SIN_gamma}).
We define \begin{equation}\notag
  \omega:(N_0,N_{\max})\to (0,N_0)
\end{equation}
such that the point
$(\frac{D}{D+p}\omega(N_1),\omega(N_1),0)$
is the omega-limit point of the heteroclinic orbit,
denoted by $\gamma(N_1)$,
of \eqref{deq_center_SIN}
starting from $(\frac{D}{D+p}N_1,N_1,0)$.

It is also known \cite{Li:2016} that
the invariant manifold $\mathcal{Z}_0$
and the center manifold $W^c_0(\mathcal Z_0)$
of system \eqref{deq_center_SIN}
still exist for the perturbed system \eqref{deq_SIN},
and that the perturbed center manifold
is a global attractor for \eqref{deq_SIN} for each small $\epsilon>0$.
Denote the perturbed manifolds by $\mathcal{Z}_\epsilon$
and $W^c_\epsilon(\mathcal Z_\epsilon)$.
We parametrize the center manifold $W^c_\epsilon(\mathcal Z_\epsilon)$
by $S=\tilde{S}_\epsilon(I,N)$,
and define $\tilde{S}=\tilde{S}_0$.
Then the restriction of system \eqref{deq_SIN} on $W^c(\mathcal Z_\epsilon)$
can be written as 
\begin{equation}\label{deq_SIN_Wc}\begin{aligned}
  &I'=\big(g(\tilde{S}_\epsilon(I,N),N)- a\big)I,
  \\
  &N'= \epsilon f(N) -\alpha I.
\end{aligned}\end{equation}

With $(N,I)$ in \eqref{deq_SIN_Wc} playing the role of $(a,b)$ in \eqref{deq_ab},
and $N_0$ playing the role of $\bar{a}$,
the function $\chi:[N_0,N_{\max})\to \mathbb R$ defined by \eqref{def_chi}
is equal to
\begin{equation}\label{chi_SIN}
  \chi(N_1)
  = \int_{\omega(N_1)}^{N_1} \frac{g\left(\frac{D}{D+p}N,N\right)-a}{f(N)}\;dN.
\end{equation}
By \eqref{lambda_simple2} in Proposition~\ref{prop_lambdaH}
and \eqref{int_gh} in Remark~\ref{rmk_gh},
the function $\lambda:[N_0,N_{\max})\to \mathbb R$
defined by \eqref{def_lambda}
is equal to
\begin{equation}\notag
  \lambda(N_1)
  = \ln\frac{f(N_1)}{f(\omega(N_1))}
  + \int_{\gamma(N_1)}\frac{\partial_I\big(g(\tilde{S}(I,N),N)\big)}{-\alpha}\;dN.
\end{equation} 
That is, \begin{equation}\label{lambda_SIN}
  \lambda(N_1)
  = \ln\frac{f(N_1)}{f(\omega(N_1))}
  -\frac{1}{\alpha} \int_{\gamma(N_1)}\partial_Sg(\tilde{S}(I,N),N)\,\partial_I\tilde{S}(I,N)\;dN.
\end{equation}

\begin{remark}
The function $\chi$ in \eqref{chi_SIN}
is equivalent to the function $\bar{F}$ in \cite{Li:2016}
in the sense that $\chi(N)>0$ if and only of $\bar{F}(N)>N$.
\end{remark}

\begin{theorem}\label{thm_SIN}
Assume that $N_1\in (N_0,N_{\max})$ satisfies $\chi(N_1)=0$
and $\lambda(N_1)\ne 0$,
where $\chi(N)$ and $\lambda(x)$
are defined in \eqref{chi_SIN} and \eqref{lambda_SIN}, respectively.
Then for any sufficiently small $\epsilon>0$,
there is a unique periodic orbit ${\ell}_\epsilon$ of \eqref{deq_SIN}
in a $O(\epsilon)$-neighborhood of $\gamma(N_1)$.
The minimal period of $\ell_\epsilon$, denoted by $T_\epsilon$, satisfies \begin{equation}\label{est_Teps_SIN}
  T_\epsilon= \frac{1}{\epsilon}\left(
    \int_{\omega(N_1)}^{N_1}\frac{1}{f(N)}\;dN+o(1)
  \right)
  \quad\text{as }\epsilon\to 0.
\end{equation}
Moreover, 
${\ell}_\epsilon$ is locally orbitally asymptotically stable if $\lambda(N_1)<0$,
and is orbitally unstable if $\lambda(N_1)>0$.
Conversely, 
if $\chi(N_1)\ne 0$,
then for any point $z_1$ in the interior of the trajectory $\gamma(N_1)$,
there is a neighborhood $U$ of $z_1$
such that no periodic orbit of \eqref{deq_ab}
intersects $U$ for any sufficiently small $\epsilon>0$.
\end{theorem}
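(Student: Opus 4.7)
The plan is to reduce to the planar system \eqref{deq_SIN_Wc} on the center manifold $W^c_\epsilon(\mathcal{Z}_\epsilon)$, apply Theorem \ref{thm_general} there, and then lift the conclusions back to the full three-dimensional system \eqref{deq_SIN} by using the fact that $W^c_\epsilon(\mathcal{Z}_\epsilon)$ is a global attractor. Structurally, this is the same argument used in the proof of Theorem \ref{thm_predprey}.

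First I would verify that \eqref{deq_SIN_Wc} fits the template \eqref{deq_ab} with $N$ playing the role of $a$ and $I$ playing the role of $b$. Under this identification, the function $f$ in \eqref{deq_ab} becomes the logistic $f(N)$ from \eqref{SIN_f}, the function $h$ becomes the constant $-\alpha$, and the function $g$ in \eqref{deq_ab} becomes $g(\tilde{S}_\epsilon(I,N),N)-a$. The sign hypothesis \eqref{turning_f} then follows because $f(N)>0$ on $(0,N_{\max})$, hypothesis \eqref{turning_h} is immediate because $-\alpha<0$, and hypothesis \eqref{turning_g} holds with $\bar{a}=N_0$ by the definition of $N_0$ together with the monotonicity assumptions on $g$. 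The structural hypothesis $\mathrm{(\hyperref[cond_H]{H})}$ is precisely the statement, recalled before the theorem, that the limiting system \eqref{deq_center_SIN} possesses a smooth family of heteroclinic orbits $\gamma(N_1)$ on $W^c_0(\mathcal{Z}_0)$ with endpoints parameterized by $a_\alpha(N_1)=N_1$ and $a_\omega(N_1)=\omega(N_1)$.

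Next, I would observe that the characteristic functions supplied by Theorem \ref{thm_general}, specialized to \eqref{deq_SIN_Wc}, have already been computed in the paragraphs preceding the theorem: $\chi$ reduces to \eqref{chi_SIN}, and $\lambda$ reduces to \eqref{lambda_SIN} via Proposition \ref{prop_lambdaH} combined with the identity \eqref{int_gh} from Remark \ref{rmk_gh}. Consequently, the hypotheses $\chi(N_1)=0$ and $\lambda(N_1)\ne 0$ of Theorem \ref{thm_SIN} are exactly the hypotheses of Theorem \ref{thm_general} applied to \eqref{deq_SIN_Wc}. Invoking that theorem directly yields a unique periodic orbit $\ell_\epsilon$ of \eqref{deq_SIN_Wc} in an $O(\epsilon)$-neighborhood of $\gamma(N_1)$, with the period asymptotics \eqref{est_Teps_ab} specializing to \eqref{est_Teps_SIN} since $f(a,0,0)=f(N)$, and with local orbital stability or instability determined by the sign of $\lambda(N_1)$. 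The converse statement for $\chi(N_1)\ne 0$ follows directly from the corresponding converse in Theorem \ref{thm_general}.

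The final step, and the only delicate point, is lifting from the center manifold to $\mathbb{R}^3$. Because $W^c_\epsilon(\mathcal{Z}_\epsilon)$ is invariant, $\ell_\epsilon$ is automatically a periodic orbit of \eqref{deq_SIN}; because $W^c_\epsilon(\mathcal{Z}_\epsilon)$ is normally hyperbolic and attracting, local orbital asymptotic stability within $W^c_\epsilon$ transfers to $\mathbb{R}^3$, and orbital instability transfers likewise. The main point to handle with care is the uniqueness of $\ell_\epsilon$ as a periodic orbit of the full three-dimensional system near $\gamma(N_1)$: any periodic orbit of \eqref{deq_SIN} lying in a sufficiently small tubular neighborhood of the center manifold must itself lie on $W^c_\epsilon(\mathcal{Z}_\epsilon)$ by normal hyperbolicity, reducing uniqueness to the planar result already obtained. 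For the converse, the fact that $W^c_\epsilon(\mathcal{Z}_\epsilon)$ is a global attractor ensures that any putative periodic orbit near an interior point of $\gamma(N_1)$ would have to lie on $W^c_\epsilon$, contradicting the planar non-existence statement.
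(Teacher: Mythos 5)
Your proposal is correct and follows essentially the same route as the paper: reduce to the planar system \eqref{deq_SIN_Wc} on $W^c_\epsilon(\mathcal{Z}_\epsilon)$ with $(N,I)$ in the role of $(a,b)$, apply Theorem~\ref{thm_general} (with $\chi$, $\lambda$ given by \eqref{chi_SIN}, \eqref{lambda_SIN} via Proposition~\ref{prop_lambdaH} and Remark~\ref{rmk_gh}), and lift existence, stability, and non-existence back to \eqref{deq_SIN} using the invariance, normal hyperbolicity, and global attractivity of the center manifold. Your explicit verification of hypotheses \eqref{turning_f}--\eqref{turning_g} and $\mathrm{(\hyperref[cond_H]{H})}$, and of uniqueness of nearby periodic orbits via normal hyperbolicity, only makes explicit steps the paper leaves implicit.
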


\begin{proof}
If $\chi(N_1)\ne 0$, then by Theorem \ref{thm_general},
system \eqref{deq_center_SIN} has no periodic orbit near $\gamma(N_1)$
in $W^c_\epsilon(\mathcal{Z}_\epsilon)$
for any small $\epsilon>0$.
Since $W^c_\epsilon(\mathcal{Z}_\epsilon)$ is a global attractor,
it follows that system \eqref{deq_SIN} has no periodic orbit near $\gamma(N_1)$
in $\mathbb R^3_+$.

Next assume that $\chi(N_1)\ne 0$.
Then by Theorem \ref{thm_general},
system \eqref{deq_center_SIN} has a unique periodic orbit $\ell_\epsilon$
in a $O(\epsilon)$-neighborhood of $\gamma(N_1)$
in $W^c_\epsilon(\mathcal{Z}_\epsilon)$
for every small $\epsilon>0$.
If $\lambda(N_1)>0$,
then $\ell_\epsilon$ is unstable for \eqref{deq_center_SIN},
and therefore is unstable for \eqref{deq_SIN}.
If $\lambda(N_1)<0$,
then $\ell_\epsilon$ is locally orbitally asymptotically stable for \eqref{deq_center_SIN}.
Since $W^c_\epsilon(\mathcal{Z}_\epsilon)$ is a hyperbolic attractor,
$\ell_\epsilon$ is locally orbitally asymptotically stable for \eqref{deq_SIN}.
\end{proof}

\begin{remark}
The period $T_\epsilon$ of the limit cycle $\ell_\epsilon$
is referred to as
{\em interepidemic period} (IEP)
in \cite{Li:2016},
where it was shown numerically that
$T_\epsilon$ is proportional to $1/\epsilon$.
Their observation is consistence with
the asymptotic formula \eqref{est_Teps_SIN}.
\end{remark}

We are not able to determine
the signs of $\chi$ and $\lambda$ in \eqref{chi_SIN} and \eqref{lambda_SIN} analytically.
Nonetheless,
we are able to compute $\chi$ and $\lambda$ numerically.
A numerical difficulty in the computation
is to approximate $\partial_I\tilde{S}$,
since there is no explicit formula for $\tilde{S}$.
We implement the following algorithm to compute $\partial_I\tilde{S}$:
Fix a small number $\delta>0$
and large integers $T$ and $M$.
For $N_1\in (N_0,N_{\max})$,
denote by $(S,I,N)(t;N_1)$ the solution of
\eqref{deq_center_SIN}
with initial value
$(S,I,N)(0)=(\frac{D}{D+p}N_1,0,N_1)+\delta \vec{v}$,
where $\vec{v}$ is an eigenvector corresponding
to the unstable eigenvalue of 
the linearization of \eqref{deq_center_SIN}
at $(\frac{D}{D+p}N_1,0,N_1)$,
so that the forward trajectory of this solution is near
the heteroclinic orbit
with the alpha-limit point $(\frac{D}{D+p}N_1,0,N_1)$.
Let $N_k$, $k=1,\dots,M$, be a grid of the interval $[N_0+\delta,N_{\max}]$,
and let $t_j$, $j=0,1,\dots, TM$, be a grid of the interval $[0,T]$.
Define \begin{equation}\notag
  u_k^j=u(t_j;N_k)
  \quad\text{for}\;\; 1\le k\le M,\; 1\le j\le TM,\; u=S,I,N,
\end{equation}
Define $\Delta_x u_k^j= u_{k+1}^j-u_k^j$
and $\Delta_tu_k^j=u_{k}^{j+1}-u_k^j$.
Then the numerical approximations
of $\partial_I\tilde{S}$ and $\partial_N\tilde{S}$
are \begin{equation}\label{partialS}\begin{aligned}
  \begin{bmatrix}
      (\partial_I\tilde{S})_{k}^j
      &
      (\partial_N\tilde{S})_{k}^j
  \end{bmatrix}
  =
  \begin{bmatrix}
      \Delta_t S_{k}^{j}
      &
      \Delta_x S_{k}^{j}
  \end{bmatrix}
  \begin{bmatrix}
      \Delta_tI_k^j& \Delta_xI_k^j
      \\[.5em]
      \Delta_tN_k^j& \Delta_xN_k^j
  \end{bmatrix}^{-1}.
\end{aligned}\end{equation}
We use this approximation for $\partial_I\tilde{S}$
to evaluate formula \eqref{lambda_SIN} for $\lambda$.

\begin{example}
\label{ex_SIN1}
Following Li et al.\ \cite[Section 5.1, Case 1]{Li:2016},
we consider $g(S,N)=\frac{\beta S}{m+S}$
and parameters
$D=0.2$, $p=0.01$, $\alpha=0.048$, $\beta=1$, $\gamma=0.75$,
$m=0.1$ and $N_{\max}=400$.
It was proved in \cite{Li:2016} that,
form small $\epsilon>0$,
system \eqref{deq_SIN} has
a stable periodic orbit
and the positive equilibrium is unstable.
Our numerical simulation,
illustrated in Figure~\ref{fig_SIN1}(A),
shows that $\chi$ defined by \eqref{chi_SIN}
has a root $N_1\approx 377.01$
with $\lambda(N_1)\approx -4.11<0$.
Hence $\gamma(N_1)$ corresponds to a stable relaxation oscillation.
A trajectory with initial data $(S,I,N)=(60,2,120)$
and $\epsilon=10^{-5}$
is shown in Figure~\ref{fig_SIN1}(B).
The trajectory first
is attracted by the slow manifold $W_\epsilon^c(\mathcal{Z}_\epsilon)$,
and then follows the dynamics on $W_\epsilon^c(\mathcal{Z}_\epsilon)$
to approach the periodic orbit near $\gamma(N_1)$.
\end{example}

In the next example
we demonstrate that
by varying the perturbation term $\epsilon f(N)$,
system \eqref{deq_SIN} can obtain two relaxation oscillations.
The idea is that
the positive function $f(N)$
effects the magnitude of the integrand
in formula \eqref{chi_SIN} of $\chi(N)$,
and hence the function $\chi(N)$ can gain extra roots by deforming $f(N)$.
This method conceptually can be used
to construct an arbitrary number of relaxation oscillations.

\begin{example}
\label{ex_SIN2}
We replace $f(N)$ defined by \eqref{SIN_f}
with \begin{equation}\notag
  f_1(N)=f(N)-c_1\exp(-c_2(N-c_3))
\end{equation}
with $(c_1,c_2,c_3)=(60,0.04,90)$.
The function $f_1(N)$
differs from $f(N)$ 
essentially only
in a small interval to the right of $N_0$ (see Figure~\ref{fig_SIN_f}).
From our numerical simulation,
as shown in Figure~\ref{fig_SIN_ex2}(A),
the function $\chi$ defined by \eqref{chi_SIN}
with $f$ replaced by $f_1$
has two roots,
$N_1\approx 156.89$ and $N_2\approx 342.18$,
with $\lambda(N_1)\approx 1.06>0$
and $\lambda(N_2)\approx -2.48<0$.
Hence $\gamma(N_1)$ corresponds to an unstable relaxation oscillation
for system \eqref{deq_SIN}
with $f$ replaced by $f_1$,
and $\gamma(N_2)$ corresponds to a stable relaxation oscillation.
Some trajectories
for the system with $\epsilon=10^{-5}$
are shown in Figure~\ref{fig_SIN_ex2}(B).
\end{example}

\begin{remark}
Example \ref{ex_SIN2}
is consistent with Li et al.\ \cite[Section 5.1, Case 2]{Li:2016},
in which two periodic orbits were observed.
It was proved in \cite{Li:2016} that,
for some parameters,
system \eqref{deq_SIN} exhibits
a stable periodic orbit
while the positive equilibrium is asymptotically stable,
which implies that there must be at least one unstable periodic orbit.
It was commented in \cite[Section 4.3]{Li:2016} that, in general,
the unstable periodic orbit is not necessarily a relaxation oscillation
but a small periodic orbit through a subcritical Hopf bifurcation.
Using Theorem~\ref{thm_SIN},
we are able to confirm that
there is an isolated unstable periodic orbit for small $\epsilon>0$
that forms a relaxation oscillation.
\end{remark}

\begin{figure}[htb]
{\centering
\begin{tabular}{cc}
{\includegraphics[trim = 2.4cm 6.2cm 1.7cm 6.6cm, clip, width=.48\textwidth]%
{fig_SIN_ex1_chi_lambda}}
&
{\includegraphics[trim= 1.2cm 0cm 1cm 0cm, clip, width=.48\textwidth]{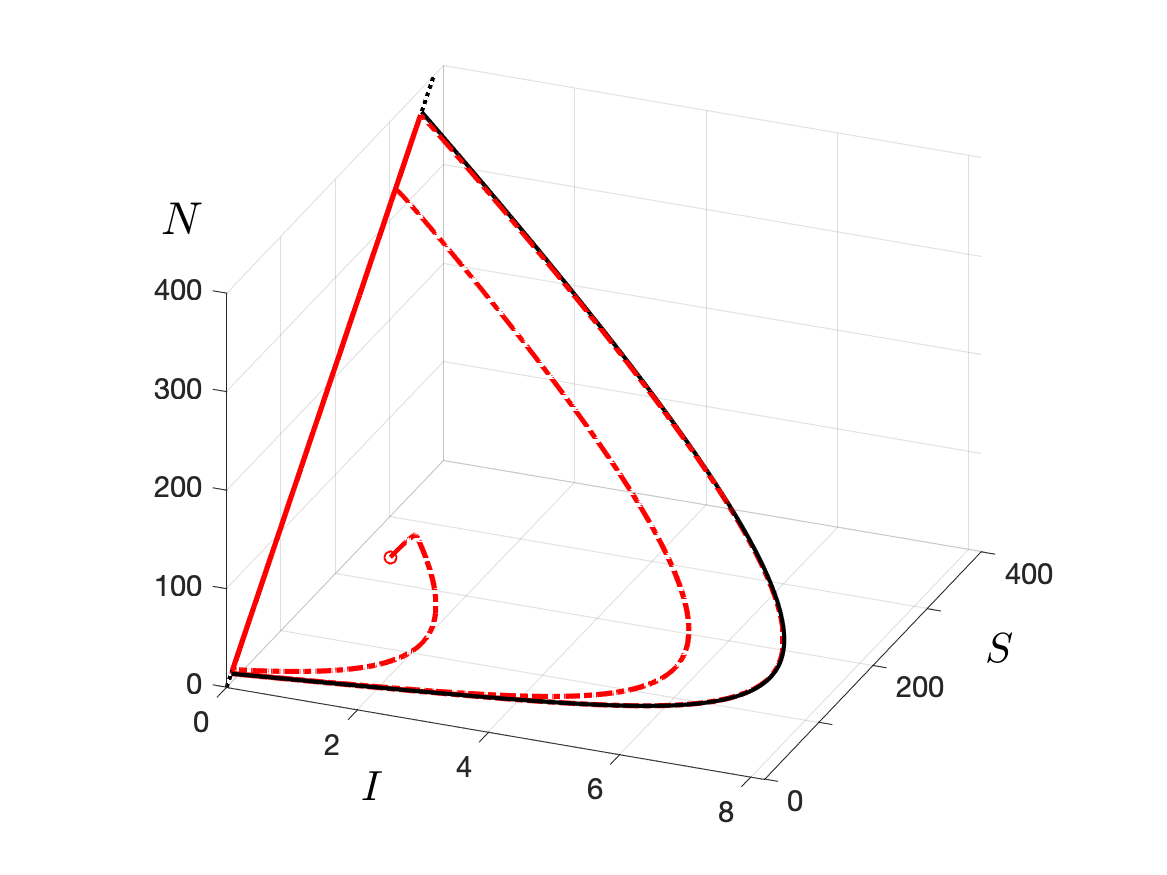}}
\\
(A)
& (B)
\end{tabular}
\par}
\caption{
(A) For Example~\ref{ex_SIN1},
the function $\chi$ has a root $N_1\approx 377.01$
with $\lambda(N_1)\approx -4.11<0$.
(B) 
The dashed curve is a trajectory for
the system with $\epsilon=10^{-5}$
with the initial condition
$(S,I,N)(0)=(60,2,120)$,
and the solid curve is the singular orbit $\gamma(N_1)$.
The simulation shows that
the trajectory
approaches a periodic orbit near $\gamma(N_1)$.
}
\label{fig_SIN1}
\end{figure}

\begin{figure}[htb]
{\centering
{\includegraphics[trim = 2.4cm 6.2cm 1.7cm 6.6cm, clip, width=.48\textwidth]%
{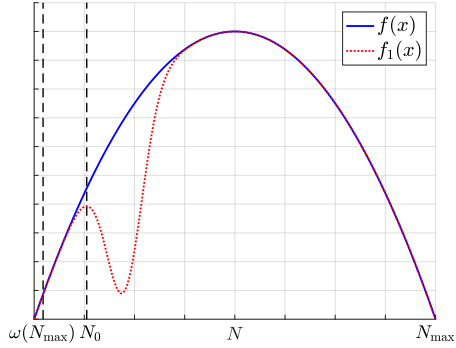}}
\par}
\caption{
The perturbation term $\epsilon f(N)$
with $f(N)=N(1-N_{\max})$ 
in Example \ref{ex_SIN1}
is replaced by $\epsilon f_1(N)$
with $f_1(N)=f(N)-c_1\exp(-c_2(N-c_3))$
in Example \ref{ex_SIN2}.
Essentially $f_1$ is obtained
by dropping the value of $f$ in a small interval right to $N_0$.
}
\label{fig_SIN_f}
\end{figure}

\begin{figure}[htb]
{\centering
\begin{tabular}{cc}
{\includegraphics[trim = 2.4cm 6.2cm 1.7cm 6.6cm, clip, width=.48\textwidth]{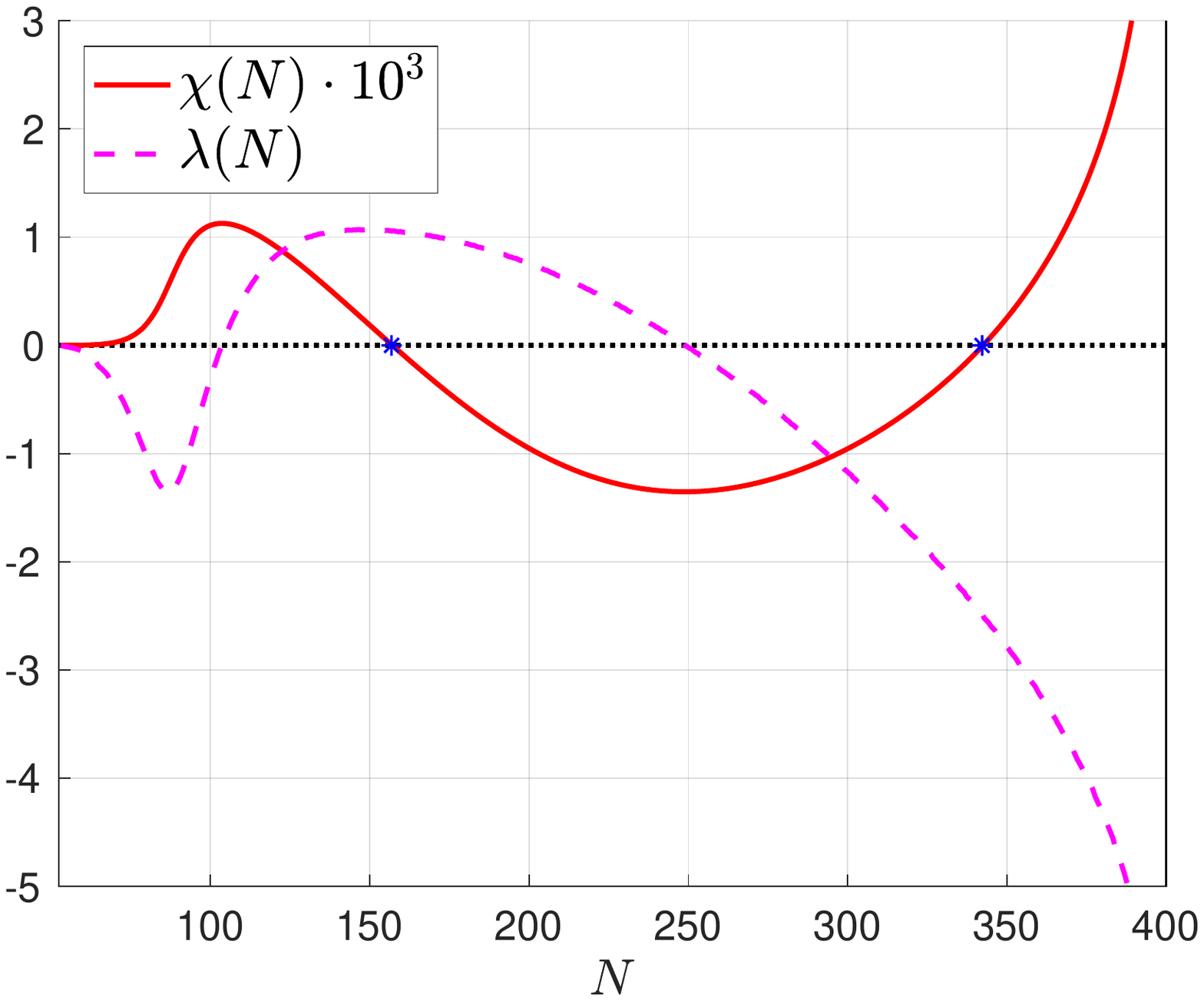}}
&
{\includegraphics[trim= 1.2cm 0cm 1cm 0cm, clip, width=.48\textwidth]{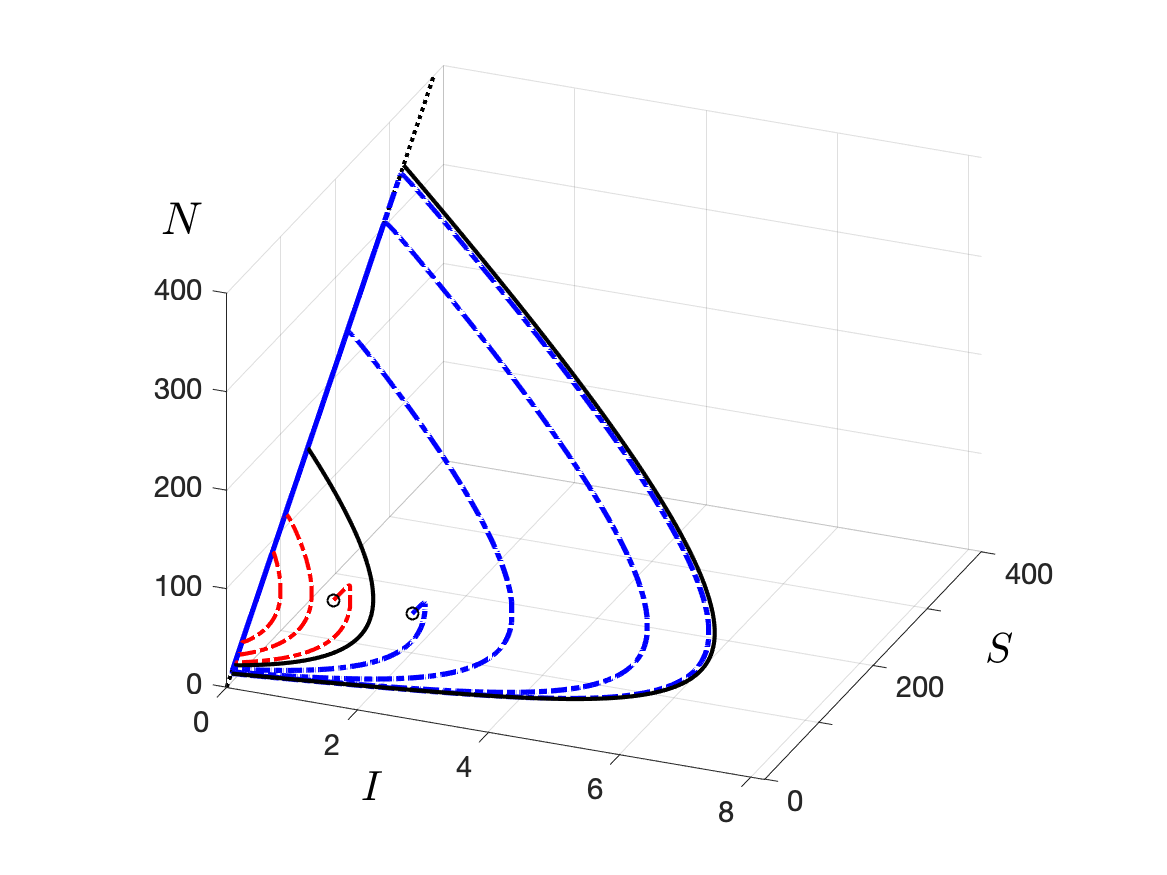}}
\\
(A)& (B)
\end{tabular}
\par}
\caption{
(A) For Example~\ref{ex_SIN2},
the function $\chi$ has two roots,
$N_1\approx 156.89$ and $N_2\approx 342.18$,
with $\lambda(N_1)\approx 1.06>0$
and $\lambda(N_2)\approx -2.48<0$.
(B) 
The dashed curves are trajectories for
the system with $\epsilon=10^{-5}$,
and the solid curves are the singular orbits
$\gamma(N_1)$ and $\gamma(N_2)$.
The simulation shows that
the trajectory
with the initial condition
$(S,I,N)(0)=(40,2.5,80)$
approaches a periodic orbit near $\gamma(N_2)$
while trajectory
with the initial condition
$(S,I,N)(0)=(40,1.3,80)$
approaches the interior equilibrium.
Near $\gamma(N_1)$ is an unstable periodic orbit.
}
\label{fig_SIN_ex2}
\end{figure}

\section{Discussion}
\label{sec_discussion}

In this paper 
we derived a criterion
to determine the location and stability of relaxation oscillations
for the planar system \eqref{deq_ab}
under assumption~$\mathrm{(\hyperref[cond_H]{H})}$.
In Theorem \ref{thm_general},
characteristic functions $\chi(s)$ and $\lambda(s)$
were obtained from
the trajectory $\gamma(s)$
of the limiting system with $\epsilon=0$
and satisfy that
\begin{center}
  \begin{tabular}{l}
  (i)\; $\chi(s_0)\ne 0$
  \;$\Rightarrow$\;
	no periodic orbit passes  near $\gamma(s_0)$ for all $0<\epsilon\ll 1$.
  \\[.4em]
  (ii) $\chi(s_0)=0$ and $\lambda(s_0)\ne 0$
  \;$\Rightarrow$\;
  \text{$\gamma(s_0)$ admits a relaxation oscillation as $\epsilon\to 0$.}
\end{tabular}
\end{center}
In the latter case, the sign of $\lambda(s_0)$
determines the stability of the limit cycles.
The proof of the theorem involves
geometric singular perturbation theory
and a variation of Floquet Theory.
In Propositions \ref{prop_lambdaFH},
\ref{prop_lambdaH} and \ref{prop_lambdaG},
simplified expressions of $\chi$ and $\lambda$
were provided
for the case
that some terms in the system
satisfy certain forms.

We applied this criterion to
two systems using two different techniques.
By relating $\chi(s)$
with a line integral on the heteroclinic orbit,
in Theorem \ref{thm_1hump}
we derived a condition
under which
the chemostat predator-prey system \eqref{deq_predprey}
has a unique limit cycle.
By varying the perturbation term,
we demonstrate
in Examples \ref{ex_SIN1} and \ref{ex_SIN2}
that 
the the epidemic model \eqref{deq_SIR}
can have a prescribed number
of relaxation oscillations.
In general,
it is a numerical challenge
to compute unstable periodic orbits in three-dimensional systems.
Our characteristic functions
provide a way to locate both stable and unstable periodic orbits.
 
\section*{Acknowledgements}
The research for this paper was completed while
TH was a Fuqua Research Assistant Professor at the University of Miami.
GW was supported by the Natural Sciences and Engineering Council (NSERC)
of Canada Discovery Grant Accelerator Supplement.
The authors would like to thank
the anonymous referees
for their constructive comments.
TH thanks Prof.~Zhisheng Shuai
for sharing his insights on
the interepidemic period for the epidemic model.


\begin{thebibliography}{10}

\bibitem{Bolger:2018}
T.~Bolger, B.~Eastman, M.~Hill, and G.~S.~K. Wolkowicz.
\newblock A predator-prey model in the chemostat with {H}olling type {II}
  ({M}onod) response function, preprint.

\bibitem{Butler:1986}
G.~Butler and P.~Waltman.
\newblock Persistence in dynamical systems.
\newblock {\em J. Differential Equations}, 63(2):255--263, 1986.
\newblock \href {http://dx.doi.org/10.1016/0022-0396(86)90049-5}
  {\path{doi:10.1016/0022-0396(86)90049-5}}.

\bibitem{Cheng:1981}
K.-S. Cheng.
\newblock Uniqueness of a limit cycle for a predator-prey system.
\newblock {\em SIAM J. Math. Anal.}, 12(4):541--548, 1981.
\newblock \href {http://dx.doi.org/10.1137/0512047}
  {\path{doi:10.1137/0512047}}.

\bibitem{DeMaesschalck:2010}
P.~De~Maesschalck and F.~Dumortier.
\newblock Singular perturbations and vanishing passage through a turning point.
\newblock {\em J. Differential Equations}, 248(9):2294--2328, 2010.
\newblock \href {http://dx.doi.org/10.1016/j.jde.2009.11.009}
  {\path{doi:10.1016/j.jde.2009.11.009}}.

\bibitem{DeMaesschalck:2016}
P.~De~Maesschalck and S.~Schecter.
\newblock The entry-exit function and geometric singular perturbation theory.
\newblock {\em J. Differential Equations}, 260(8):6697--6715, 2016.
\newblock \href {http://dx.doi.org/10.1016/j.jde.2016.01.008}
  {\path{doi:10.1016/j.jde.2016.01.008}}.

\bibitem{Fenichel:1979}
N.~Fenichel.
\newblock Geometric singular perturbation theory for ordinary differential
  equations.
\newblock {\em J. Differential Equations}, 31(1):53--98, 1979.
\newblock \href {http://dx.doi.org/10.1016/0022-0396(79)90152-9}
  {\path{doi:10.1016/0022-0396(79)90152-9}}.

\bibitem{Ghazaryan:2015}
A.~Ghazaryan, V.~Manukian, and S.~Schecter.
\newblock Travelling waves in the {H}olling-{T}anner model with weak diffusion.
\newblock {\em Proc. R. Soc. Lond. Ser. A}, 471(2177):20150045, 16, 2015.
\newblock \href {http://dx.doi.org/10.1098/rspa.2015.0045}
  {\path{doi:10.1098/rspa.2015.0045}}.

\bibitem{Graef:1996}
J.~R. Graef, M.~Y. Li, and L.~Wang.
\newblock A study on the effects of disease caused death in a simple epidemic
  model.
\newblock In W.~Chen and S.~Hu, editors, {\em Dynamical Systems and
  Differential Equations}, pages 288--300. Southwest Missouri State University
  Press, 1998.

\bibitem{Hsu:2009}
S.-B. Hsu and J.~Shi.
\newblock Relaxation oscillation profile of limit cycle in predator-prey
  system.
\newblock {\em Discrete Contin. Dyn. Syst. Ser. B}, 11(4):893--911, 2009.
\newblock \href {http://dx.doi.org/10.3934/dcdsb.2009.11.893}
  {\path{doi:10.3934/dcdsb.2009.11.893}}.

\bibitem{Hsu:2017}
T.-H. Hsu.
\newblock On bifurcation delay: an alternative approach using geometric
  singular perturbation theory.
\newblock {\em J. Differential Equations}, 262(3):1617--1630, 2017.
\newblock \href {http://dx.doi.org/10.1016/j.jde.2016.10.022}
  {\path{doi:10.1016/j.jde.2016.10.022}}.

\bibitem{Hsu:2019}
T.-H. Hsu.
\newblock Number and stability of relaxation oscillations for predator-prey
  systems with small death rates.
\newblock {\em SIAM J. Appl. Dyn. Syst.}, 18(1):33--67, 2019.
\newblock \href {http://dx.doi.org/10.1137/18M1166705}
  {\path{doi:10.1137/18M1166705}}.

\bibitem{Huzak:2018}
R.~Huzak.
\newblock Predator-prey systems with small predator's death rate.
\newblock {\em Electron. J. Qual. Theory Differ. Equ.}, pages Paper No. 86, 16,
  2018.
\newblock \href {http://dx.doi.org/10.14232/ejqtde.2018.1.86}
  {\path{doi:10.14232/ejqtde.2018.1.86}}.

\bibitem{Jones:1995}
C.~K. R.~T. Jones.
\newblock Geometric singular perturbation theory.
\newblock In {\em Dynamical systems ({M}ontecatini {T}erme, 1994)}, volume 1609
  of {\em Lecture Notes in Math.}, pages 44--118. Springer, Berlin, 1995.
\newblock \href {http://dx.doi.org/10.1007/BFb0095239}
  {\path{doi:10.1007/BFb0095239}}.

\bibitem{Kuang:1988}
Y.~Kuang and H.~I. Freedman.
\newblock Uniqueness of limit cycles in {G}ause-type models of predator-prey
  systems.
\newblock {\em Math. Biosci.}, 88(1):67--84, 1988.
\newblock \href {http://dx.doi.org/10.1016/0025-5564(88)90049-1}
  {\path{doi:10.1016/0025-5564(88)90049-1}}.

\bibitem{Kuehn:2016}
C.~Kuehn.
\newblock {\em Multiple time scale dynamics}, volume 191 of {\em Applied
  Mathematical Sciences}.
\newblock Springer, Cham, 2015.
\newblock \href {http://dx.doi.org/10.1007/978-3-319-12316-5}
  {\path{doi:10.1007/978-3-319-12316-5}}.

\bibitem{Li:2013}
C.~Li and H.~Zhu.
\newblock Canard cycles for predator-prey systems with {H}olling types of
  functional response.
\newblock {\em J. Differential Equations}, 254(2):879--910, 2013.
\newblock \href {http://dx.doi.org/10.1016/j.jde.2012.10.003}
  {\path{doi:10.1016/j.jde.2012.10.003}}.

\bibitem{Li:2016}
M.~Y. Li, W.~Liu, C.~Shan, and Y.~Yi.
\newblock Turning points and relaxation oscillation cycles in simple epidemic
  models.
\newblock {\em SIAM J. Appl. Math.}, 76(2):663--687, 2016.
\newblock \href {http://dx.doi.org/10.1137/15M1038785}
  {\path{doi:10.1137/15M1038785}}.

\bibitem{Liou:1988}
L.-P. Liou and K.-S. Cheng.
\newblock On the uniqueness of a limit cycle for a predator-prey system.
\newblock {\em SIAM J. Math. Anal.}, 19(4):867--878, 1988.
\newblock \href {http://dx.doi.org/10.1137/0519060}
  {\path{doi:10.1137/0519060}}.

\bibitem{Liu:2003}
W.~Liu, D.~Xiao, and Y.~Yi.
\newblock Relaxation oscillations in a class of predator-prey systems.
\newblock {\em J. Differential Equations}, 188(1):306--331, 2003.
\newblock \href {http://dx.doi.org/10.1016/S0022-0396(02)00076-1}
  {\path{doi:10.1016/S0022-0396(02)00076-1}}.

\bibitem{Lundstrom:2018}
N.~L. Lundstr{\"o}m and G.~S{\"o}derbacka.
\newblock Estimates of size of cycle in a predator-prey system.
\newblock {\em Differential Equations and Dynamical Systems}, pages 1--29,
  2018.
\newblock \href {http://dx.doi.org/10.1007/s12591-018-0422-x}
  {\path{doi:10.1007/s12591-018-0422-x}}.

\bibitem{Piltz:2017}
S.~H. Piltz, F.~Veerman, P.~K. Maini, and M.~A. Porter.
\newblock A predator-2 prey fast-slow dynamical system for rapid predator
  evolution.
\newblock {\em SIAM J. Appl. Dyn. Syst.}, 16(1):54--90, 2017.
\newblock \href {http://dx.doi.org/10.1137/16M1068426}
  {\path{doi:10.1137/16M1068426}}.

\bibitem{Shen:2019}
J.~Shen, C.-H. Hsu, and T.-H. Yang.
\newblock Fast--slow dynamics for intraguild predation models with evolutionary
  effects.
\newblock {\em Journal of Dynamics and Differential Equations}, 2019.
\newblock \href {http://dx.doi.org/10.1007/s10884-019-09744-3}
  {\path{doi:10.1007/s10884-019-09744-3}}.

\bibitem{Smith:1995}
H.~L. Smith and P.~Waltman.
\newblock {\em The theory of the chemostat}, volume~13 of {\em Cambridge
  Studies in Mathematical Biology}.
\newblock Cambridge University Press, Cambridge, 1995.
\newblock Dynamics of microbial competition.
\newblock \href {http://dx.doi.org/10.1017/CBO9780511530043}
  {\path{doi:10.1017/CBO9780511530043}}.

\bibitem{Wang:2014}
J.~Wang, X.~Zhang, J.~Shi, and Y.~Wang.
\newblock Profile of the unique limit cycle in a class of general predator-prey
  systems.
\newblock {\em Appl. Math. Comput.}, 242:397--406, 2014.
\newblock \href {http://dx.doi.org/10.1016/j.amc.2014.05.020}
  {\path{doi:10.1016/j.amc.2014.05.020}}.

\bibitem{Wolkowicz:1988}
G.~S.~K. Wolkowicz.
\newblock Bifurcation analysis of a predator-prey system involving group
  defence.
\newblock {\em SIAM J. Appl. Math.}, 48(3):592--606, 1988.
\newblock \href {http://dx.doi.org/10.1137/0148033}
  {\path{doi:10.1137/0148033}}.

\end{thebibliography}

\end{document}